\newtheorem{thm}{Theorem}[section]
\newtheorem{prop}[thm]{Proposition}
\newtheorem{cor}[thm]{Corollary}
\def\rom#1{\emph{#1}}
\def\({\rom(}
\def\){\rom)}
\def\:{\colon}
\def\cH{\mathcal{H}}
\def\cR{\mathcal{R}}
\def\cM{\mathcal{M}}
\def\cK{\mathcal{K}}
\def\cP{\mathcal{P}}
\def\e{\varepsilon}
\def\g{\gamma}
\def\s{\sigma}
\def\d{\delta}
\def\ss{\subset}
\def\border{\partial}
\def\gA{\overline{\gamma(A)}}
\def\gAs{\overline{\gamma\(A\)}}
\def\dis{\operatorname{dis}}
\def\sim{\Delta}
\def\dis{\operatorname{dis}}
\def\diam{\operatorname{diam}}
\begin{document}

\title{The Hausdorff Mapping Is Nonexpanding}
\author{Ivan A. Mikhaylov}
\date{}
\maketitle

\begin{abstract}
In the present paper we investigate the properties of the Hausdorff mapping $\cH$, which takes  each compact metric space to the space of its nonempty closed subspaces. It is shown that this mapping is nonexpanding (Lipschitz mapping with constant $1$).  This paper gives several examples of classes of metric spaces, distances between which are preserved by the mapping $\cH$. We also calculate distance between any connected metric space and any simplex with greater diameter  than the former  one. At the end of the paper we discuss some properties of the Hausdorff mapping  which may help to prove that it is isometric.
\end{abstract}

\section{Introduction}

By $\cH(X)$ we denote the space of all nonempty bounded and closed subspaces of a metric space $X$, endowed with the Hausdorff metric. We will call a metric compact space \emph{compactum}. By $\cM$ we denote the space of all compacta (considered up to an isometry) endowed with the Gromov--Hausdorff metric. The calculation of the Gromov--Hausdorff distance between two arbitrary compacta is, generally, a difficult task, since the most convenient known formula requires calculation of distortions for all possible correspondence (see Preliminaries for definitions and this formula). So it is interesting to study isometric embeddings of $\cM$ into itself. Such isometricies may provide more convenient way of calculation the Gromov--Hausdorff distance between two compacta, by replacing them with their images. Then the calculation of the Gromov--Hausdorff distance between new spaces  may be easier than between the original ones. But there is a lot of obstacles (connected with the properties of $\cM$) in the search  for such isometries. For example, there is the hypothesis that there is no (bijective) isometry from Gromov--Hausdorff space to itself, except identity function. Nevertheless, isometric (non-bijective) mappings may exist. The \emph{Hausdorff mapping} $\cH: \cM \to \cM, X \mapsto \cH(X)$ is a pretender to be such a mapping.

Our goal is to prove that the mapping $\cH$ is nonexpanding and to give examples of spaces, the Gromov--Hausdorff  distance between which are preserved by the Hausdorff mapping. In order to do this we will study the Hausdorff distance. It turns out, that if we have two metric spaces $X$ and $Y$ isometrically embedded in the third one $Z$, then for any compact subset $A \ss X $ the closest (in a sense of Hausdorff distance) subset of $Y$ is a closure of a set, which contains points in $Y$ closest (in a sense of metric in $Z$)  to points in $A$ (see Corollary \ref{cor:cor2}). This fact helps to prove that $\cH$ is Lipschitz mapping with constant 1. 

\section{Preliminaries}

Let $A$ be an arbitrary set. Denote by $\#A$ the \emph{cardinality} of the set $A$. 

Assume that $Z$ is an arbitrary metric space. The distance between its points $x$ and $y$ is denoted by $|xy|$. By ${\diam Z}$ we denote the diameter of $Z$. If $X, Y \subset Z$  are nonempty subsets, then we put $|XY| = \inf\bigl\{|xy| : x \in X, y \in Y\bigr\}$ and $|XY|' = \sup\bigl\{|xy| : x \in X, y \in Y\bigr\}$. If $X = \{x\}$ then we simply write $|xY| = |Yx|$ and $|xY|'=|Yx|'$ instead of  $\bigl|\{x\}Y\bigr| = \bigl|Y\{x\}\bigr|$ and  $\bigl|\{x\}Y\bigr|' = \bigl|Y\{x\}\bigr|'$, respectively. 

For nonempty  $X, Y \subset Z$ let $$|XY|_{Z} = \max\bigl\{\sup\limits_{x\in X}|xY|, \sup\limits_{y\in Y}|yX|\bigr\}.$$ This value is called the \emph{Hausdorff distance between X and Y}. The family of all nonempty bounded and closed subsets of the metric space $Z$ we denote by $\cH(Z)$ and call the \emph{Hausdorff hyperspace}. It is well-known~\cite{burago} that the space $\cH(Z)$ with the Hausdorff distance forms a metric space. 

Let $X$ and $Y$ be metric spaces. A triple $(X',Y',Z)$ consisting of a metric space $Z$ and its subsets $X'$ and $Y'$ isometric  to $X$ and $Y$, respectively, is called a \emph{realization of the pair} $(X,Y)$. \emph{The Gromov--Hausdorff distance } $d_{GH}(X,Y)$ \emph{between} $X$ \emph{and} $Y$ is the infinum of real numbers $r$, for
which there exists a realization $(X',Y',Z)$ of $(X,Y)$ with $|X'Y'|_Z \le r$. The function $d_{GH}$, being restricted to the set $\cM$ of all isometry classes of compacta (compact metric spaces), forms a metric~\cite{burago}.

\begin{prop}[\cite{burago}]\label{prop:basic}
For any metric spaces $X$ and $Y$ it holds
\begin{enumerate}
\item $d_{GH}(X,Y) \le \frac{1}{2}\max\{\diam X, \diam Y\}$\rom{;}
\item if  $X$ is a one-point space then $d_{GH}(X,Y) = \frac{1}{2}\diam Y$.
\end{enumerate}
\end{prop}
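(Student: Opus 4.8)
The plan is to handle the two parts separately: part (1) by exhibiting an explicit realization built on a disjoint union, and part (2) by combining part (1) with a matching lower bound.

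For part (1) I would take $Z = X \sqcup Y$ as a set, keep the original distances within $X$ and within $Y$, fix a number $r$ with $r \ge \frac12\max\{\diam X, \diam Y\}$ and $r > 0$, and declare $|xy|_Z = r$ for all $x \in X$, $y \in Y$. The only thing to verify is that this is a metric; the triangle inequality is immediate except for triples with exactly one point on the opposite side, and there the relevant cases ($x, x' \in X$ and $y \in Y$, needing $|xx'| \le 2r$; and $x \in X$, $y \in Y$, needing $r \le \diam X + r$) follow at once, the case with two points of $Y$ being symmetric. Since every cross-distance in $Z$ equals $r$, we get $|XY|_Z = r$, hence $d_{GH}(X,Y) \le r$; letting $r$ decrease to $\frac12\max\{\diam X, \diam Y\}$ (and to $0$ when that maximum vanishes) yields the stated bound.

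For part (2), the inequality $d_{GH}(X,Y) \le \frac12\diam Y$ is just part (1) with $\diam X = 0$. For the reverse, let $(X', Y', Z)$ be any realization with $X' = \{p\}$ a single point. Then $|X'Y'|_Z = \sup_{y \in Y'}|py|$, since $\sup_{x \in X'}|xY'| = \inf_{y \in Y'}|py| \le \sup_{y \in Y'}|py| = \sup_{y \in Y'}|yX'|$. For arbitrary $y_1, y_2 \in Y'$ the triangle inequality gives $|y_1 y_2| \le |y_1 p| + |p y_2| \le 2|X'Y'|_Z$, so taking the supremum over $y_1, y_2$ gives $\diam Y = \diam Y' \le 2|X'Y'|_Z$. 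As this holds for every realization, $d_{GH}(X,Y) \ge \frac12\diam Y$, and together with the upper bound this is an equality.

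I expect the only delicate point to be in part (1): ensuring that the glued function is a genuine metric rather than a mere pseudometric, which is why $r$ is taken strictly positive and the final bound is reached through the infimum in the definition of $d_{GH}$ rather than attained directly. Everything else is a straightforward unwinding of the definitions of $|XY|_Z$ and $d_{GH}$.
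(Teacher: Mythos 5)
Your proof is correct. The paper states this proposition without proof, citing Burago--Burago--Ivanov, and your argument (the disjoint union $X \sqcup Y$ with constant cross-distance $r$ for part (1), plus the triangle-inequality lower bound $\diam Y' \le 2\sup_{y}|py| = 2|X'Y'|_Z$ for part (2)) is exactly the standard one from that reference; your care with $r>0$ and with passing to the infimum is appropriate.
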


Let $X$ and $Y$ be arbitrary nonempty sets. Recall that a \emph{relation} between the sets $X$ and $Y$ is a subset of the Cartesian product $X  \times Y$. The set of all nonempty relations between $X$ and $Y$ we denote by $\cP(X, Y )$. Let us look at each relation $\s \in \cP(X, Y )$ as a multivalued mapping whose domain may be less than the whole X. Then, similarly with the case of mappings, for any $x \in X$ and any $A \ss X$ there are defined their images $\s(x)$ and $\s(A)$, and for any $y \in Y$ and any $B \ss Y$ their preimages $\s^{-1}(y)$ and $\s^{-1}(B)$, respectively. $A$ relation $R \in \cP(X, Y)$ is called a \emph{correspondence} if the restrictions of the canonical projections $\pi_X : (x, y) \mapsto x$ and $\pi_Y : (x, y) \mapsto y$ onto $R$ are surjective. The set of all correspondences between $X$ and $Y$ we denote by $\cR(X, Y )$.

Let $X$ and $Y$ be arbitrary metric spaces. The \emph{distortion} $\dis \s$ \emph{of a relation} $\s \in \cP(X, Y)$ is the value $$\dis \s = \sup\Big\{\big||xx'|-|yy'|\big|:(x,y),(x',y')\in \s\Big\}.$$

\begin{prop}[\cite{burago}]\label{prop:equality}
For any compacta $X$ and $Y$ it holds
$$d_{GH}(X,Y)=\frac{1}{2}\inf\big\{\dis R: R \in \cR(X,Y)\big\}.$$
\end{prop}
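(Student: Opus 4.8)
The plan is to establish the two inequalities separately; write $I=\inf\bigl\{\dis R:R\in\cR(X,Y)\bigr\}$, so that the assertion reads $2\,d_{GH}(X,Y)=I$. To obtain $d_{GH}(X,Y)\le\frac12 I$ I would fix an arbitrary correspondence $R\in\cR(X,Y)$, choose $\e>0$, and put $r=\frac12\dis R+\e$. On the disjoint union $W=X\sqcup Y$ define $\rho$ so that it restricts to the given metrics on $X$ and on $Y$, while for $x\in X$ and $y\in Y$
$$\rho(x,y)=\inf\bigl\{|xx'|+r+|y'y|:(x',y')\in R\bigr\}.$$
The crucial step is to verify that $\rho$ is a metric on $W$: positivity holds because $\rho(x,y)\ge r>0$; the triangle inequalities among points on the same side are immediate; and the mixed triangle inequalities reduce, after one use of the triangle inequality inside $X$ or inside $Y$ together with the estimate $\bigl||x'x''|-|y'y''|\bigr|\le\dis R$ valid for $(x',y'),(x'',y'')\in R$, to the inequality $2r\ge\dis R$, which holds by the choice of $r$. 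Then $(X,Y,W)$ is a realization of $(X,Y)$, and since $R$ is a correspondence every $x\in X$ is $R$-related to some $y\in Y$, whence $\rho(x,Y)\le r$; symmetrically $\rho(y,X)\le r$ for every $y\in Y$. Hence $|XY|_{W}\le r$, so $d_{GH}(X,Y)\le\frac12\dis R+\e$; letting $\e\to 0$ and taking the infimum over $R$ gives $d_{GH}(X,Y)\le\frac12 I$.

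For the reverse inequality $\frac12 I\le d_{GH}(X,Y)$ I would take any $r>d_{GH}(X,Y)$ and a realization $(X',Y',Z)$ of $(X,Y)$ with $|X'Y'|_{Z}\le r$, identifying $X$ with $X'$ and $Y$ with $Y'$ inside $Z$. Put $R=\bigl\{(x,y)\in X\times Y:|xy|\le r\bigr\}$, the distance taken in $Z$. Since $\sup_{x\in X}|xY|\le|X'Y'|_{Z}\le r$ and $Y$ is compact, the infimum $|xY|$ is attained for each $x$, so $x$ is $R$-related to some $y$; symmetrically each $y\in Y$ is $R$-related to some $x\in X$, so $R\in\cR(X,Y)$. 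If $(x_1,y_1),(x_2,y_2)\in R$, the triangle inequality in $Z$ gives $|x_1x_2|\le|x_1y_1|+|y_1y_2|+|y_2x_2|\le|y_1y_2|+2r$ and, symmetrically, $|y_1y_2|\le|x_1x_2|+2r$; hence $\dis R\le 2r$ and so $I\le 2r$. Letting $r\downarrow d_{GH}(X,Y)$ yields $I\le 2\,d_{GH}(X,Y)$, and combining the two bounds proves the proposition.

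I expect the main obstacle to be the verification that the glued function $\rho$ is an honest metric, in particular the mixed triangle inequalities of the form $|y_1y_2|\le\rho(y_1,x)+\rho(x,y_2)$ and $|x_1x_2|\le\rho(x_1,y)+\rho(x_2,y)$, since it is precisely there that the relation $2r\ge\dis R$ enters. The only other delicate point is the appeal to compactness in the second part, used to guarantee that nearest points exist and hence that $R$ is genuinely a correspondence; for general bounded spaces one would instead enlarge $r$ by an extra $\e$ and pass to the limit.
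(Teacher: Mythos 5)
Your proof is correct and is exactly the standard argument from the cited reference \cite{burago}: gluing $X\sqcup Y$ along a correspondence with the metric $\rho(x,y)=\inf\{|xx'|+r+|y'y|\}$ for one inequality, and extracting a correspondence $\{(x,y):|xy|\le r\}$ from a realization for the other. The paper itself states this proposition without proof, so there is nothing to compare beyond noting that your treatment of the two delicate points (the mixed triangle inequalities via $2r\ge\dis R$, and compactness guaranteeing that $R$ is surjective onto both factors) is sound.
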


We call a metric space $M$ a \emph{simplex} if all its nonzero distances are the same. Notice that a simplex
$M$ is compact iff it is finite. A simplex consisting of $n$ vertices on the distance $t$ from each other
is denoted by $t\sim_n$. For $t=1$ the space $t\sim_n$ is denote by  $\sim_n$ for short.

\begin{prop}[\cite{distances}]\label{prop:dist1}
For positive integer $p,q$ and real $t,s > 0$ it holds
$$2d_{GH}(t\sim_p,s\sim_q) = \begin{cases} |t-s|, & \mbox{if $p=q$,} \\ \max\{t, s-t\},& \mbox{if $p>q$,} \\ \max\{s, s-t\},& \mbox{if $p<q$.}\end{cases}$$
\end{prop}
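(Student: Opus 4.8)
The plan is to read off the formula from Proposition~\ref{prop:equality}: it suffices to evaluate $\inf\{\dis R : R\in\cR(t\sim_p,s\sim_q)\}$. Write $X=t\sim_p$ and $Y=s\sim_q$; being compact simplices they are finite, and I assume $p,q\ge 2$ (the degenerate cases $p=1$, $q=1$, where one space is a single point, are immediate from Proposition~\ref{prop:basic}). The observation that drives everything is that for any $(x,y),(x',y')\in R$ the number $\bigl||xx'|-|yy'|\bigr|$ lies in the four-element set $\{0,t,s,|t-s|\}$, since every nonzero distance in $X$ equals $t$ and every nonzero distance in $Y$ equals $s$. Hence $\dis R$ is determined by which of three configurations occur inside $R$: two distinct points of $X$ related to a common point of $Y$ (contributing $t$); one point of $X$ related to two distinct points of $Y$ (contributing $s$); two distinct points of $X$ related to two distinct points of $Y$ (contributing $|t-s|$). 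The computation thus becomes the combinatorial question of which configurations a correspondence must contain and which it can avoid.

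For $p=q$ I would argue as follows. The graph of a bijection $X\to Y$ contains, among the three, only the third configuration, so its distortion is $|t-s|$ and $2d_{GH}\le|t-s|$. For the reverse inequality I may assume $t\ge s$ (the case is symmetric in $t$ and $s$), so $|t-s|=t-s\le t$. If a correspondence $R$ contains the first configuration then $\dis R\ge t\ge|t-s|$; if not, then each $R^{-1}(y)$ has at most one element, and, being nonempty with union $X$, the map $R^{-1}$ is a surjection $Y\to X$ between finite sets of equal size, hence a bijection, so $R$ is the graph of a bijection and $\dis R=|t-s|$. Either way $\dis R\ge|t-s|$.

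For $p>q$ I would first observe that a counting argument forces $\dis R\ge t$ for \emph{every} correspondence: if each $R^{-1}(y)$ had at most one element we would get $\#X\le\#R\le\#Y$, contradicting $p>q$, so some $R^{-1}(y)$ has two points and the first configuration occurs. Conversely, as $p>q$ there is a surjection $f\colon X\to Y$, necessarily non-injective; its graph avoids the second configuration but contains the first (non-injectivity) and the third (since $q\ge 2$), so has distortion $\max\{t,|t-s|\}=\max\{t,s-t\}$, the claimed value. To close the lower bound I then only need $\dis R\ge s-t$ for all $R$ in the regime $s>2t$ (where $s-t>t$): if $R$ avoids the second configuration it is the graph of a surjection $X\to Y$, which takes at least two values (as $q\ge 2$) and so produces a third-configuration pair of value $s-t$; and if $R$ contains the second configuration then $\dis R\ge s>s-t$. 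Finally, the case $p<q$ follows immediately from $d_{GH}(t\sim_p,s\sim_q)=d_{GH}(s\sim_q,t\sim_p)$ together with the already-proved case, the roles of the two spaces being exchanged so that the first one now has the larger number of vertices.

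The upper bounds are just the three explicit correspondences above, so the step I expect to be the main obstacle is the collection of lower bounds. Within these, the delicate regime is $\max\{t,s\}>2\min\{t,s\}$, where $|t-s|>\min\{t,s\}$: there the naive estimate ``$\dis R\ge\min\{t,s\}$'' is too weak, and one must use the surjectivity of \emph{both} projections of a correspondence, together with $\#X\ne\#Y$, to rule out every way a correspondence might dodge a pair of value $|t-s|$.
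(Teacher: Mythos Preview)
The paper does not supply a proof of this proposition; it is simply quoted from~\cite{distances}, so there is no in-paper argument to compare yours against. On its own merits your proof is correct: the reduction of $\bigl||xx'|-|yy'|\bigr|$ to the set $\{0,t,s,|t-s|\}$, the classification into three ``configurations'', the explicit correspondences giving the upper bounds (bijection for $p=q$, surjection for $p>q$), and the pigeonhole and surjectivity arguments for the lower bounds all go through as written.

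One point worth flagging: your symmetry step for $p<q$ actually produces $\max\{s,t-s\}$, whereas the statement as printed reads $\max\{s,s-t\}$. Since $t>0$ forces $s-t<s$, the printed expression collapses to $s$, and that is not the correct value in general (take $p=2$, $q=3$, $t=3$, $s=1$: every correspondence contains a pair with $x\ne x'$ and $y\ne y'$, contributing $|t-s|=2$, so $2d_{GH}=2\neq s=1$). The third line of the display is a typo for $\max\{s,t-s\}$; your argument establishes the correct version, and you should say so rather than claim to have recovered the formula exactly as stated.
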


\begin{prop}[\cite{distances}]\label{prop:dist2}
Let $M$ be a finite metric space, $n=\#M$. Then for every $m \in \mathbb{N}, m>n$ and $t > 0$ we have
$$2d_{GH}(t\sim_m, M)=\max\{t,\diam M - t\}.$$
\end{prop}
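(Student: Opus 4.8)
The plan is to apply Proposition~\ref{prop:equality}, so that the problem becomes the estimation of $\inf\bigl\{\dis R : R \in \cR(t\sim_m, M)\bigr\}$ from above and from below. Write $t\sim_m = \{v_1,\dots,v_m\}$, so that $|v_iv_j| = t$ for $i \ne j$, and $M = \{x_1,\dots,x_n\}$; recall $m > n \ge 1$, so in particular $t\sim_m$ has at least two points. The upper and lower bounds will be handled completely separately: the upper bound by one explicit correspondence, the lower bound by a universal estimate valid for all correspondences.

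For $2d_{GH}(t\sim_m,M) \le \max\{t,\diam M - t\}$ no cleverness is needed. Since $m \ge n$, there is a surjection $f\colon t\sim_m \to M$; its graph $R_f = \bigl\{(v,f(v)) : v \in t\sim_m\bigr\}$ is a correspondence, since both canonical projections are surjective. If $v = v'$, the corresponding term in $\dis R_f$ is $0$. If $v \ne v'$, then $|vv'| = t$ and $|f(v)f(v')| \in [0,\diam M]$, so $\bigl||vv'| - |f(v)f(v')|\bigr| = \bigl|t - |f(v)f(v')|\bigr|$, and since $s \mapsto |t-s|$ attains its maximum on $[0,\diam M]$ at an endpoint, this is at most $\max\bigl\{t,\,|t-\diam M|\bigr\} = \max\{t,\diam M - t\}$. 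Hence $\dis R_f \le \max\{t,\diam M-t\}$.

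For the reverse inequality I will show that every $R \in \cR(t\sim_m,M)$ satisfies both $\dis R \ge t$ and $\dis R \ge \diam M - t$, whence $\dis R \ge \max\{t,\diam M - t\}$ and the claim follows from Proposition~\ref{prop:equality}. For the first estimate, pick $f(v_i) \in R(v_i)$ for each $i$; since $m > n$ this map is not injective, so $f(v_i) = f(v_j) =: x$ for some $i \ne j$, and the pairs $(v_i,x),(v_j,x) \in R$ give $\dis R \ge \bigl|t - 0\bigr| = t$. For the second estimate, choose $x,x' \in M$ realizing $|xx'| = \diam M$ (possible since $M$ is finite) and choose $v,v' \in t\sim_m$ with $(v,x),(v',x') \in R$. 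If $v = v'$ then $(v,x),(v,x') \in R$, giving $\dis R \ge \diam M \ge \diam M - t$; if $v \ne v'$ then $\dis R \ge \bigl|t - \diam M\bigr| \ge \diam M - t$. Either way $\dis R \ge \diam M - t$, as required, and the two inequalities together finish the proof.

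I do not anticipate a real difficulty; the argument is short and essentially combinatorial, the only genuine input being pigeonhole applied to the strict inequality $m > n$. The few points deserving care are the degenerate configurations in the lower bound --- the subcase in which the two diameter-realizing points of $M$ are images of one and the same vertex of $t\sim_m$, and the regime $\diam M < t$ in which $\diam M - t$ is negative and that half of the bound is vacuous --- together with the routine verification that the relations used really are correspondences. It is also worth remarking afterwards that the case $n = 1$ agrees with Proposition~\ref{prop:basic}(2), and that specialising $M = s\sim_q$ recovers the $p > q$ case of Proposition~\ref{prop:dist1}.
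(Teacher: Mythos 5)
Your proof is correct: the paper itself states Proposition~\ref{prop:dist2} as a quoted result from~\cite{distances} and gives no proof, so there is nothing internal to compare against, but your argument via Proposition~\ref{prop:equality} --- an explicit surjection-graph correspondence for the upper bound, and pigeonhole on $m>n$ plus a diameter-realizing pair for the two lower bounds --- is exactly the standard route and all the degenerate cases ($\diam M < t$, a one-point $M$, the two diameter points sharing a preimage) are handled. No gaps.
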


For any metric space $M$ we put $$\e(M)=\inf\bigr\{|xy|: x, y \in M, x\ne y\bigl\}.$$

\begin{prop}[\cite{distances}]\label{prop:dist3}
Let $M$ be a finite metric space, $n=\#M$, then 
$$2d_{GH}(t\sim_n, M) = \max\bigr\{t-\e(M), \diam M -t\bigl\}.$$
\end{prop}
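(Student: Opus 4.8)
The plan is to invoke Proposition~\ref{prop:equality} and reduce the statement to computing $\inf\{\dis R : R \in \cR(t\sim_n,M)\}$. Since $t\sim_n$ and $M$ are both finite, $\cR(t\sim_n,M)$ is a finite set, so this infimum is attained. I would also dispose of the degenerate case $n=1$ at once (then $t\sim_n$ and $M$ are one-point spaces and both sides vanish) and afterwards assume $n\ge 2$, so that $0<\e(M)\le\diam M$. It then suffices to prove the two inequalities $2d_{GH}(t\sim_n,M)\le\max\{t-\e(M),\diam M-t\}$ and $2d_{GH}(t\sim_n,M)\ge\max\{t-\e(M),\diam M-t\}$ separately.

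For the upper bound I would take any bijection $f\colon t\sim_n\to M$ — it exists because $\#(t\sim_n)=n=\#M$ — and use it as a correspondence. Because every nonzero distance in $t\sim_n$ equals $t$ and $f$ is bijective,
$$\dis f=\sup\bigl\{\bigl|\,t-|ab|\,\bigr|:a,b\in M,\ a\ne b\bigr\}.$$
The function $d\mapsto|t-d|$ is convex, and among the nonzero distances of $M$ both the least, $\e(M)$, and the greatest, $\diam M$, are attained; hence $\dis f=\max\{|t-\e(M)|,\,|t-\diam M|\}$, which equals $\max\{t-\e(M),\,\diam M-t\}$ since $\e(M)-t\le\diam M-t$ and $t-\diam M\le t-\e(M)$. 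Therefore $2d_{GH}(t\sim_n,M)\le\dis f=\max\{t-\e(M),\,\diam M-t\}$.

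For the lower bound I would fix an arbitrary $R\in\cR(t\sim_n,M)$ and show $\dis R\ge\diam M-t$ and $\dis R\ge t-\e(M)$. The first is immediate: pick $a,b\in M$ with $|ab|=\diam M$ and points $x\in R^{-1}(a)$, $y\in R^{-1}(b)$, which exist because $R$ is a correspondence; since $|xy|\le t$ we get $\dis R\ge|ab|-|xy|\ge\diam M-t$. For the second, pick $a,b\in M$ with $a\ne b$ and $|ab|=\e(M)$, together with $x\in R^{-1}(a)$, $y\in R^{-1}(b)$. If $x\ne y$ then $|xy|=t$, so $\dis R\ge|t-\e(M)|\ge t-\e(M)$. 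If $x=y$, then $R(x)\supseteq\{a,b\}$, so $\sum_{p\in t\sim_n}\#R(p)\ge(n-1)+2=n+1$ (each fibre is nonempty because $R$ projects onto $t\sim_n$); were the $n$ fibres $R(p)$ pairwise disjoint they would contain at most $\#M=n$ points in all, a contradiction, so some $q\in M$ lies in $R(p_1)\cap R(p_2)$ for distinct $p_1,p_2$, whence $\dis R\ge\bigl|\,|p_1p_2|-|qq|\,\bigr|=t\ge t-\e(M)$. Combining, $\dis R\ge\max\{t-\e(M),\diam M-t\}$ for every $R$, and with the upper bound this yields the claimed equality.

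The computation is short, and the one place that calls for care is the bound $\dis R\ge t-\e(M)$ when the closest pair of $M$ has a common $R$-preimage: the pigeonhole observation that a correspondence between two $n$-point spaces possessing a non-singleton fibre must have two overlapping fibres is exactly what turns such a ``collapse'' into a distortion-$t$ pair, and it is the reason an optimal correspondence can be taken to be a bijection. The remaining ingredients are just the convexity of $d\mapsto|t-d|$ and the defining surjectivity of correspondences; it would also be worth remarking that the formula is consistent with Proposition~\ref{prop:dist1} (case $p=q$) when $M$ is itself a simplex, since then $\e(M)=\diam M$.
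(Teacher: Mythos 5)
This proposition is quoted in the paper from \cite{distances} and no proof of it appears in the text, so there is nothing internal to compare your argument against; judged on its own, your proof is correct and complete. The upper bound via a bijective correspondence (using convexity of $d\mapsto|t-d|$ and the fact that a finite space attains both $\e(M)$ and $\diam M$) and the lower bound via an arbitrary correspondence are both sound, and you correctly isolate the one delicate point: when the closest pair $a,b$ of $M$ has a common preimage, the fibre $R^{-1}$ over $t\Delta_n$ acquires a two-element fibre, and the pigeonhole argument forces two distinct points $p_1,p_2$ of the simplex to share an image, giving $\dis R\ge t\ge t-\e(M)$. Two small remarks. First, your dismissal of $n=1$ is necessary but worth stating more carefully: there $\e(M)=\inf\emptyset$, so the right-hand side is not really defined by the paper's conventions, and the formula should simply be read as applying for $n\ge2$. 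Second, in the lower bound for $\diam M-t$ you write $|xy|\le t$, which silently covers the case $x=y$ (distance $0$); that is fine, but it is the reason the inequality holds without any case split there. The consistency check against Proposition~\ref{prop:dist1} (case $p=q$) is a nice touch and confirms the formula when $M$ is itself a simplex.
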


\section{Main Results}
It is well-known~\cite{burago} that if $X$ is a compactum, then $\cH(X)$ is also compact and is equal to the family of all subcompacta of $X$. Consider the \emph{Hausdorff mapping}  $\cH\:\cM  \to \cM$, taking each compactum $X$ to its Hausdorff hyperspace. Let us prove that  the mapping $\cH$ is nonexpanding (and, consequently, continuous).

Let $X, Y\in \cM$ be nonisometric  compacta. Let $X, Y$ be isometrically embedded into metric space $Z$. Due to compactness  of $Y$, for any $x \in X$ there is $y'\in Y$ such that $|xY|=|xy'|$. Let us denote $y'$ by $\g(x)$, thereby  defining the mapping $\g\:X\to Y$. Let $A \subset X$ be an arbitrary closed subset then $\gA \in \cH(Y)$, where $\overline B$ denotes the closure of $B \ss Z$. Now we describe some properties of $\gA$ (notice that $\g$ is defined only for compacta).

\begin{prop}\label{prop:prop1}
For any $A \in \cH\(X\)$ and $a \in A$ we have $\bigr|a\gAs\bigl| ~= ~\bigr|a\g\(a\)\bigl|.$
\end{prop}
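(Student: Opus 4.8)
The plan is to establish the equality by proving the two inequalities $\bigl|a\gAs\bigr|\le|a\g(a)|$ and $\bigl|a\gAs\bigr|\ge|a\g(a)|$ separately; the first is immediate, and the second follows from the very definition of $\g$ together with the fact that $Y$ is closed in $Z$.

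For the first inequality, I would observe that since $a\in A$, the point $\g(a)$ lies in $\g(A)$, hence in its closure $\gAs$. As $\bigl|a\gAs\bigr|$ is by definition the infimum of $|ab|$ over $b\in\gAs$, taking $b=\g(a)$ yields $\bigl|a\gAs\bigr|\le|a\g(a)|$.

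For the reverse inequality, I would recall that $\g(a)$ was chosen so that $|a\g(a)|=|aY|$, i.e. $\g(a)$ realizes the distance from $a$ to the whole space $Y$. Since $Y$ is a compactum, it is closed in $Z$, and therefore $\g(A)\ss Y$ gives $\gAs=\overline{\g(A)}\ss\overline{Y}=Y$. Thus every $b\in\gAs$ satisfies $|ab|\ge|aY|=|a\g(a)|$, and passing to the infimum over $b\in\gAs$ gives $\bigl|a\gAs\bigr|\ge|a\g(a)|$. Combined with the first inequality, this proves the proposition.

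I do not expect a genuine obstacle here. The only point that requires attention is that $\gAs$ denotes the closure taken in the ambient space $Z$, not in $Y$; one must therefore use that $Y$ is closed in $Z$ (a consequence of its compactness) to ensure that forming this closure cannot introduce points of $Z\setminus Y$ lying strictly closer to $a$ than $\g(a)$.
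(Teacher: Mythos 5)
Your proof is correct and follows essentially the same route as the paper: the lower bound comes from $|ay|\ge|a\g(a)|$ for all $y\in Y$ together with $\gAs\ss Y$, and the upper bound from $\g(a)\in\gAs$. You merely make explicit a point the paper leaves implicit, namely that the closure is taken in $Z$ and that compactness of $Y$ guarantees $\gAs\ss Y$.
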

\begin{proof}
By definition  of $\g$, for any $y \in Y$ it is true that $|ay|\ge~ \bigr|a\g(a)\bigl|$, therefore for any $a \in A$ it holds $
\bigr|a\gA\big|~= \inf \limits_{y \in \gA}|ay| =~\bigr|a\g(a)\bigl|$ because $\g(a) \in \gA$.
\end{proof}

\begin{prop}\label{prop:prop2}
For every $A \in \cH\(X\)$ it holds $\bigr|A\gAs\bigl|_{Z} ~= \sup \limits_{a\in A}\bigr|a\g\(a\)\bigl|$.
\end{prop}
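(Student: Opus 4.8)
The plan is to unwind the definition of the Hausdorff distance and then bound each of the two suprema separately, using Proposition \ref{prop:prop1} as the engine. Recall that
$$\bigl|A\gA\bigr|_{Z} = \max\Bigl\{\sup_{a\in A}\bigl|a\gA\bigr|,\ \sup_{b\in \gA}\bigl|bA\bigr|\Bigr\}.$$
First I would handle the first supremum: by Proposition \ref{prop:prop1} we have $\bigl|a\gA\bigr| = \bigl|a\g(a)\bigr|$ for every $a\in A$, so $\sup_{a\in A}\bigl|a\gA\bigr| = \sup_{a\in A}\bigl|a\g(a)\bigr|$, which is already the claimed right-hand side. Hence it remains only to show that the second supremum does not exceed this quantity, i.e. that $\bigl|bA\bigr| \le \sup_{a\in A}\bigl|a\g(a)\bigr|$ for every $b\in \gA$.

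For the second supremum, fix $b\in \gA$. Since $\gA$ is the closure of $\g(A)$ in $Z$, there is a sequence $a_n\in A$ with $\g(a_n)\to b$. Then $\bigl|bA\bigr| \le \bigl|b a_n\bigr| \le \bigl|b\g(a_n)\bigr| + \bigl|\g(a_n)a_n\bigr| \le \bigl|b\g(a_n)\bigr| + \sup_{a\in A}\bigl|a\g(a)\bigr|$; letting $n\to\infty$ makes the first term vanish, giving $\bigl|bA\bigr| \le \sup_{a\in A}\bigl|a\g(a)\bigr|$ as desired. (If one prefers to avoid sequences, the same estimate follows because $|bA|$ is a $1$-Lipschitz function of $b$, hence its value on the closure $\gA$ is controlled by its values on $\g(A)$, where for $b=\g(a)$ one has trivially $|bA|\le|ba| = |a\g(a)|$.) Combining the two parts, the max equals $\sup_{a\in A}\bigl|a\g(a)\bigr|$.

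I do not anticipate a serious obstacle here; the only point requiring a little care is the passage to the closure in the second supremum, since $\g$ itself need not be continuous, so one cannot simply say $b=\g(a)$ for some $a\in A$ — one must approximate $b$ by points of $\g(A)$ and use the triangle inequality together with continuity of the distance function $z\mapsto|zA|$. Everything else is a direct substitution of Proposition \ref{prop:prop1}.
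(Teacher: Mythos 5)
Your proof is correct, and in the key step it is genuinely simpler than the paper's. The overall decomposition is the same: both arguments split the Hausdorff distance into its two suprema, dispose of $\sup_{a\in A}|a\overline{\gamma(A)}|$ immediately via Proposition \ref{prop:prop1}, and then bound $|yA|$ for $y$ in the closure of $\gamma(A)$. The difference is in how the closure is handled. The paper takes a sequence $a_n\in A$ with $\gamma(a_n)\to y$, uses compactness of $A$ to pass to a convergent subsequence $a_n\to a$, and then runs a $\delta/3$ contradiction argument to establish the stronger pointwise identity $|ay|=|a\gamma(a)|$ before concluding $|yA|\le|a\gamma(a)|$. You bypass all of this: the chain $|yA|\le|ya_n|\le|y\gamma(a_n)|+|a_n\gamma(a_n)|\le|y\gamma(a_n)|+\sup_{a\in A}|a\gamma(a)|$ needs no convergent subsequence, no compactness of $A$, and no contradiction — only that $\gamma(a_n)\to y$. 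Your parenthetical remark (that $z\mapsto|zA|$ is $1$-Lipschitz, so its supremum over $\overline{\gamma(A)}$ equals its supremum over $\gamma(A)$, where the bound is trivial) is an equally clean alternative. What the paper's longer route buys is the sharper pointwise fact that every limit point $y$ satisfies $|ay|=|a\gamma(a)|$ for the limit $a$ of the subsequence; what yours buys is brevity and slightly weaker hypotheses in this step. Both are valid, and your correctly flagged concern — that $\gamma$ need not be continuous, so one cannot simply write $y=\gamma(a)$ for points of the closure — is exactly the point the two-case split in the paper is designed to address.
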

\begin{proof}
Let $y \in \gA$. Consider two possibilities.
\begin{enumerate}
\item  If $y \in \g(A)$ then there is $a \in A$ such that $ y=\g(a)$. But $|yA| = \inf \limits_{a \in A}|ya|$, so $|yA| \leq ~\bigr|a\g(a)\bigl|$.
 
\item If $y \in \border\g(A)$ then there is a sequence $\{a_{n}\}_{n=1}^{\infty}$ of points from  $A$ such that $\g(a_{n}) \to y$ as $n \to \infty$. Since in compact metric space any sequence of points has convergent subsequence we can, without loss of generality, assume that $a_n \to a \in A$. Suppose that  $\bigr|a\g(a)\bigl|~ < |ay|$ then $\d := |ay| - \bigr|a\g(a)\bigl|~ >0$. There is a number $k >0$ such that $|aa_{k}| < \frac{\d}{3}$ and $\bigr|y\g(a_{k})\bigl|~ < \frac{\d}{3}$. It follows from the triangle  inequality for the triangle $a\g(a)a_{k}$ that 
\begin{multline*}
$$\bigr|a_{k}\g(a)\bigl|~ \le ~\bigr|a\g(a)\bigl| + |aa_{k}| <~ \bigr|a\g(a)\bigl| + \frac{\d}{3} = |ay| - \d + \frac{\d}{3} = |ay| - \frac{2}{3}\d<\\
< |ay| - |aa_{k}| - \bigr|y\g(a_{k})\bigl|~ \le ~\bigr|a_{k}\g(a_{k})\bigl|,$$
\end{multline*} 
that contradicts  the definition of $\g(a_{k})$. Hence, $|ay| = ~\bigr|a\g(a)\bigl|$ and, consequently, $|yA| \le~ \bigr|a\g(a)\bigl|$. 
\end{enumerate}
So for every $ y \in \gA$ there is $a \in A$ such that $|yA| \leq~ \bigr|a\g(a)\bigl|$, thus $ \sup \limits_{y \in \gA}|yA| \le  \sup \limits_{a \in A}\bigr|a\g(a)\bigl|$. Hence, by Proposition~\ref{prop:prop1} we have 
$$\bigr|A\gA\bigl|_{Z}~ = \max\Bigr\{\sup \limits_{y \in \gA}|yA|,~ \sup \limits_{a \in A}\bigr|a\g(a)\bigl|\Bigl\}~ =\sup \limits_{a \in A}\bigr|a\g(a)\bigl|,$$
as required.
\end{proof}

\begin{cor}\label{cor:cor1}
For every $A\in \cH(X)$ we have $\bigr|A\gA\bigl|_Z  ~\le \sup\limits_{x\in X}\bigr|x\g(x)\bigl|$.
\end{cor}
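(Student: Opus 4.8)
The plan is to obtain this immediately from Proposition~\ref{prop:prop2}. That proposition already provides the exact identity $|A\gA|_Z = \sup_{a\in A}\bigl|a\g(a)\bigr|$ for every $A\in\cH(X)$, so the only remaining step is the observation that enlarging the index set of a supremum cannot decrease its value. Since $A\ss X$, every $a\in A$ is in particular a point of $X$, and the function $x\mapsto\bigl|x\g(x)\bigr|$ in both expressions is one and the same map $X\to Y$ attached to the fixed isometric embeddings of $X$ and $Y$ into $Z$. Hence $\sup_{a\in A}\bigl|a\g(a)\bigr| \le \sup_{x\in X}\bigl|x\g(x)\bigr|$, and composing this with the equality from Proposition~\ref{prop:prop2} gives $|A\gA|_Z \le \sup_{x\in X}\bigl|x\g(x)\bigr|$, as claimed.

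There is essentially no obstacle here: the whole content sits in Proposition~\ref{prop:prop2}, and this corollary merely records the monotone estimate in a form whose right-hand side no longer depends on $A$, which is exactly what will be needed when bounding the Hausdorff distance of images under $\cH$. If one wants to be scrupulous, the only things to keep in mind are that the supremum $\sup_{x\in X}\bigl|x\g(x)\bigr|$ is finite — because $\bigl|x\g(x)\bigr|=|xY|$ and $X$, $Y$ are bounded in $Z$ — and that it need not be attained; neither fact is needed for the inequality itself, which follows purely from $A\ss X$.
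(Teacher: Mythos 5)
Your proposal is correct and matches the paper's intended argument: the corollary is stated without proof precisely because it follows from Proposition~\ref{prop:prop2} together with the trivial monotonicity of the supremum under $A\ss X$. Nothing is missing.
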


\begin{prop}\label{prop:prop3}
For any $A\in \cH(X)$ and $B\in\cH(Y)$ it holds $|AB|_{Z}\ge ~\bigr|A\gA\bigl|_{Z}$.
\end{prop}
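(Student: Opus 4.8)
The plan is to reduce everything to Proposition~\ref{prop:prop2} and the defining property of $\g$. By Proposition~\ref{prop:prop2} we already know that $\bigl|A\gA\bigr|_Z = \sup_{a\in A}\bigl|a\g(a)\bigr|$, so it suffices to show that $|AB|_Z \ge \sup_{a\in A}\bigl|a\g(a)\bigr|$ for every $B\in\cH(Y)$.

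First I would discard half of the Hausdorff distance: since $|AB|_Z = \max\{\sup_{a\in A}|aB|,\ \sup_{b\in B}|bA|\}$, we certainly have $|AB|_Z \ge \sup_{a\in A}|aB|$, and it is this one-sided term that will carry the argument. Next, fix $a\in A$. Because $B\subset Y$, for every $b\in B$ we have $|ab|\ge |aY|$, hence $|aB| = \inf_{b\in B}|ab| \ge |aY|$. By the very definition of $\g$, $|aY| = \bigl|a\g(a)\bigr|$, so $|aB|\ge\bigl|a\g(a)\bigr|$ for each $a\in A$.

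Taking the supremum over $a\in A$ then gives $\sup_{a\in A}|aB| \ge \sup_{a\in A}\bigl|a\g(a)\bigr| = \bigl|A\gA\bigr|_Z$, and combining with the inequality $|AB|_Z\ge\sup_{a\in A}|aB|$ from the first step yields $|AB|_Z\ge\bigl|A\gA\bigr|_Z$, as required. There is no real obstacle here: the statement is essentially immediate once Proposition~\ref{prop:prop2} is in hand, and the only point to be slightly careful about is that the lower bound comes entirely from the ``$X$-side'' supremum $\sup_{a\in A}|aB|$, the ``$Y$-side'' term $\sup_{b\in B}|bA|$ playing no role.
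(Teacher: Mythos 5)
Your proof is correct and is essentially the paper's own argument: both reduce to the one-sided bound $|AB|_Z\ge\sup_{a\in A}|aB|$, use $|aB|\ge|aY|=\bigl|a\g(a)\bigr|$ from the definition of $\g$, and invoke Proposition~\ref{prop:prop2} to identify $\sup_{a\in A}\bigl|a\g(a)\bigr|$ with $\bigl|A\gA\bigr|_Z$. Nothing to add.
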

\begin{proof}
For every $a\!\in\! A$ we have $|aB| = \inf \limits_{y \in B}|ay| \geq \inf \limits_{y \in Y}|ay|= ~\bigr|a\g(a)\bigl|$. Hence, $\sup \limits_{a \in A} |aB| \geq \sup \limits_{a \in A}\bigr|a\g(a)\bigl|= \bigr|A\overline{\g(A)}\bigl|_{Z} $, therefore $$|AB|_{Z} = \max{\bigr\{\sup \limits_{a \in A}|aB|,~ \sup \limits_{b \in B}|bA|\bigr\}} \geq ~\bigr|A\overline{\gamma(A)}\bigl|_{Z},$$
as required.
\end{proof}

\begin{cor}\label{cor:cor2}
For every $A\in \cH(X)$ we have $\bigr|A\cH(Y)\bigl| = \bigr|A\gAs\bigl|_{Z}$.
\end{cor}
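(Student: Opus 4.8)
The plan is to obtain the claimed equality as a two-sided estimate, using Proposition~\ref{prop:prop3} for one inequality and exhibiting a concrete competitor for the other. Recall that $\bigr|A\cH(Y)\bigl|$ denotes the distance, measured in the hyperspace $\cH(Z)$, from the point $A$ to the subset $\cH(Y)\ss\cH(Z)$; that is, $\bigr|A\cH(Y)\bigl|=\inf\bigl\{|AB|_{Z}:B\in\cH(Y)\bigr\}$.

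First I would establish the lower bound. By Proposition~\ref{prop:prop3}, for every $B\in\cH(Y)$ we have $|AB|_{Z}\ge\bigr|A\gAs\bigl|_{Z}$, and since this holds for all such $B$, passing to the infimum over $B\in\cH(Y)$ yields $\bigr|A\cH(Y)\bigl|\ge\bigr|A\gAs\bigl|_{Z}$.

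For the reverse inequality I would simply observe that $\gAs$ is a legitimate member of $\cH(Y)$: the mapping $\g$ sends $X$ into $Y$, so $\g(A)\ss Y$, and because $Y$ is a compactum (hence closed in $Z$) the closure $\overline{\g(A)}$, taken in $Z$, is contained in $Y$ and is itself compact, so $\gAs\in\cH(Y)$. Consequently $\bigr|A\cH(Y)\bigl|\le\bigr|A\gAs\bigl|_{Z}$. Combining the two inequalities gives $\bigr|A\cH(Y)\bigl|=\bigr|A\gAs\bigl|_{Z}$, as required.

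There is essentially no hard step here: the corollary is a packaging of Proposition~\ref{prop:prop3} together with the remark that $\gAs$ realizes the infimum. The only point needing the slightest care is the justification that $\gAs\in\cH(Y)$ — i.e.\ that taking the closure in the ambient space $Z$ does not leave $Y$ — which is exactly where the compactness of $Y$ (and the standard fact that $\cH(Y)$ consists of all subcompacta of $Y$) is used.
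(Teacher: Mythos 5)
Your proposal is correct and matches the argument the paper intends (the corollary is stated without proof, following immediately from Proposition~\ref{prop:prop3}): the lower bound comes from taking the infimum over $B\in\cH(Y)$ in that proposition, and the upper bound from the fact, already noted in the paper, that $\gAs\in\cH(Y)$ and hence is an admissible competitor realizing the infimum. Nothing further is needed.
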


\begin{thm}\label{thm:main}
Let $X$ and $Y$ be compacta isometrically embedded into a metric space $Z$. Then $$\bigr|\cH(X)\cH(Y)\bigl|_{\cH(Z)}  = |XY|_{Z}.$$
\end{thm}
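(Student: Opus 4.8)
The plan is to establish the identity $\bigl|\cH(X)\cH(Y)\bigr|_{\cH(Z)} = |XY|_Z$ by proving two inequalities, most of the work for the ``$\le$'' direction having already been assembled in Corollaries \ref{cor:cor1} and \ref{cor:cor2}. Recall that by definition
$$\bigl|\cH(X)\cH(Y)\bigr|_{\cH(Z)} = \max\Bigl\{\sup_{A\in\cH(X)}\bigl|A\,\cH(Y)\bigr|,\ \sup_{B\in\cH(Y)}\bigl|B\,\cH(X)\bigr|\Bigr\},$$
so the strategy is to bound each of the two suprema.

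First I would handle the ``$\le$'' inequality. Fix $A\in\cH(X)$. By Corollary \ref{cor:cor2}, $\bigl|A\,\cH(Y)\bigr| = \bigl|A\gAs\bigr|_Z$, and by Corollary \ref{cor:cor1} this is at most $\sup_{x\in X}\bigl|x\g(x)\bigr| = \sup_{x\in X}|xY| \le |XY|_Z$ (the last step because $\sup_{x\in X}|xY|$ is precisely one of the two terms in the max defining $|XY|_Z$). Taking the supremum over $A\in\cH(X)$ gives $\sup_{A}\bigl|A\,\cH(Y)\bigr|\le |XY|_Z$. By the symmetric construction — embedding $Y$ and $X$ in $Z$ and using the analogous map $\g'\colon Y\to X$ — one gets $\sup_{B}\bigl|B\,\cH(X)\bigr| \le |XY|_Z$ as well, since $\sup_{y\in Y}|yX|$ is the other term in the max. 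Hence the maximum of the two is $\le |XY|_Z$.

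For the ``$\ge$'' inequality I would exhibit, for each of the two defining terms of $|XY|_Z$, a witness set realizing it. Note $X\in\cH(X)$ and $Y\in\cH(Y)$ themselves, so $\bigl|\cH(X)\cH(Y)\bigr|_{\cH(Z)} \ge |XY|_{\cH(Z)} \ge \sup_{x\in X}|xY| $; but more carefully, applying Proposition \ref{prop:prop3} with $A = X$ and an arbitrary $B\in\cH(Y)$ gives $|XB|_Z \ge \bigl|X\overline{\g(X)}\bigr|_Z = \sup_{x\in X}|x\g(x)| = \sup_{x\in X}|xY|$, hence $\bigl|X\,\cH(Y)\bigr| \ge \sup_{x\in X}|xY|$, and symmetrically $\bigl|Y\,\cH(X)\bigr| \ge \sup_{y\in Y}|yX|$. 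Taking the max of these two lower bounds yields exactly $|XY|_Z$, so $\bigl|\cH(X)\cH(Y)\bigr|_{\cH(Z)} \ge |XY|_Z$.

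The two inequalities together give the claimed equality. The only genuinely substantive input is Corollary \ref{cor:cor2} (equivalently Propositions \ref{prop:prop2} and \ref{prop:prop3}), which identifies the nearest point of $\cH(Y)$ to a given $A$; once that is in hand, the theorem is essentially a bookkeeping exercise with the two-sided nature of both the Hausdorff distance and the Gromov--Hausdorff-style max. The one point requiring a little care is the symmetry argument: the map $\g$ was defined using compactness of $Y$ to pick nearest points in $Y$, so for the bound on $\sup_B\bigl|B\,\cH(X)\bigr|$ one must invoke the mirror-image construction with roles of $X$ and $Y$ swapped rather than reusing $\g$ directly — but this is immediate since $Z$ contains isometric copies of both and the setup is symmetric in $X$ and $Y$.
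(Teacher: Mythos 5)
Your argument is correct and essentially the same as the paper's: both inequalities are extracted from Propositions \ref{prop:prop2}--\ref{prop:prop3} and Corollaries \ref{cor:cor1}--\ref{cor:cor2}, the only (cosmetic) difference being that for the lower bound you use the single witness $A=X$ while the paper uses the singletons $\{x\}\in\cH(X)$. One small caution: your preliminary claim $\bigl|\cH(X)\cH(Y)\bigr|_{\cH(Z)}\ge |XY|_{\cH(Z)}$ is not justified as written (picking particular elements does not lower-bound a Hausdorff distance), but the ``more careful'' version you give immediately afterwards is what actually carries the proof, so nothing is lost.
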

\begin{proof}
Corollary \ref{cor:cor1} implies that $\sup \limits_{A \in \cH(X)}\bigr|A\gA\bigl|_{Z} \le \sup \limits_{x\in X} \bigr|x\g(x)\bigl|=\sup \limits_{x\in X} |xY|$. By Corollary \ref{cor:cor2}, we have $\sup\limits_{A\in\cH(X)}\bigr|A\cH(Y)\bigl|\le\sup\limits_{x\in X}|xY|$.
\\
On the other hand, for every $x \in X$ it holds $\bigr|x\g(x)\bigl| = \bigr|\{x\}\{\g(x)\}\bigl|_{Z} = \bigr|\{x\}\overline{\g(\{x\})}\bigl|_{Z}$. Hence, $$\sup \limits_{x\in X} |xY| =\sup \limits_{x\in X} \bigr|x\g(x)\bigl| \le \sup \limits_{A \in \cH(X)}\bigr|A\gA\bigl|_{Z} = \sup \limits_{A \in \cH(X)} \bigr|A\cH(Y)\bigl|.$$
\\
Thus $\sup \limits_{A \in \cH(X)}\bigr |A\cH(Y)\bigl| = \sup \limits_{x\in X} |xY|$.
\\
Similarly, we get $\sup \limits_{B \in \cH(Y)} \bigr|B\cH(X)\bigl| = \sup \limits_{y\in Y} |yX|$.
\\
This means that
\begin{multline*}
$$\bigr|\cH(X)\cH(Y)\bigl|_{\cH(Z)} =\max\Bigr\{\sup \limits_{A \in \cH(X)} \bigr|A\cH(Y)\bigl|, \sup \limits_{B \in \cH(Y)} \bigr|B\cH(X)\bigl|\Bigl\}=\\ 
= \max\bigr\{\sup \limits_{x\in X} |xY|, \sup \limits_{y\in Y} |yX|\bigl\}= |XY|_{Z}.$$\qedhere
\end{multline*} 
\end{proof}

\begin{thm}\label{thm:result}
For any compacta $X$ and $Y$ we have $$d_{GH}\bigl(\cH(X), \cH(Y)\bigr)\le d_{GH}(X, Y).$$
\end{thm}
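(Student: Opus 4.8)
The plan is to obtain the inequality as an immediate consequence of Theorem~\ref{thm:main} together with the definition of the Gromov--Hausdorff distance through realizations. Fix an arbitrary $r > d_{GH}(X,Y)$. By the definition of $d_{GH}$ there is a realization $(X',Y',Z)$ of the pair $(X,Y)$, that is, a metric space $Z$ containing isometric copies $X'$ and $Y'$ of $X$ and $Y$, with $|X'Y'|_Z \le r$. The whole argument consists in pushing this realization forward by the Hausdorff mapping.

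First I would check that $(\cH(X'),\cH(Y'),\cH(Z))$ is a realization of the pair $(\cH(X),\cH(Y))$. Since $X'$ is compact, $\cH(X')$ is a compactum (and coincides with the family of all subcompacta of $X'$), hence represents an element of $\cM$; the same holds for $\cH(Y')$. Because $X'$ is isometrically embedded in $Z$, the Hausdorff distance between two elements of $\cH(X')$ computed inside $Z$ is the same as the one computed inside $X'$, so $\cH(X')$ is isometric to $\cH(X)$ and is an isometric subspace of $\cH(Z)$; likewise for $\cH(Y')$ and $\cH(Y)$. These are exactly the standard facts about the Hausdorff metric recalled in Section~3, so this step is routine. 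If one prefers not to rely on $\cH(Z)$ being well-behaved for a general metric space $Z$, one may first replace $Z$ by the compactum $X'\cup Y'$ with the induced metric and work with $\cH(X'\cup Y')$; nothing in the argument changes.

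Then I would apply Theorem~\ref{thm:main} to $X'$ and $Y'$ sitting inside $Z$, which yields $|\cH(X')\cH(Y')|_{\cH(Z)} = |X'Y'|_Z \le r$. Hence the realization $(\cH(X'),\cH(Y'),\cH(Z))$ witnesses $d_{GH}(\cH(X),\cH(Y)) \le r$. As $r$ was an arbitrary number greater than $d_{GH}(X,Y)$, taking the infimum over all such $r$ gives $d_{GH}(\cH(X),\cH(Y)) \le d_{GH}(X,Y)$, as claimed.

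I do not expect a genuine obstacle in this final step: all the substantive work is already contained in Theorem~\ref{thm:main} (equivalently in Corollaries~\ref{cor:cor1} and~\ref{cor:cor2}), which we are entitled to invoke. The only points requiring a line of justification are that $\cH(X')$ and $\cH(Y')$ are compacta and that $\cH$ sends an isometric embedding into $Z$ to an isometric embedding into $\cH(Z)$; both are classical properties of the Hausdorff hyperspace and carry no difficulty.
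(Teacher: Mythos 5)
Your proposal is correct and follows essentially the same route as the paper: push a realization $(X',Y',Z)$ with $|X'Y'|_Z\le r$ forward to the realization $(\cH(X'),\cH(Y'),\cH(Z))$, apply Theorem~\ref{thm:main}, and take the infimum over $r$. The extra care you take in verifying that $\cH(X')$ is a compactum isometric to $\cH(X)$ sitting isometrically inside $\cH(Z)$ is a reasonable addition the paper leaves implicit.
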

\begin{proof}
From Theorem \ref{thm:main} it follows that if we have a realization $(X, Y, Z)$ of the pair $(X, Y)$ such that $|XY|_{Z} < g$, then $\bigl(\cH(X), \cH(Y), \cH(Z)\bigr)$ is a realization of the pair $\bigl(\mathcal{H}(X), \mathcal{H}(Y)\bigr)$ such that $\bigr|\mathcal{H}(X)\mathcal{H}(Y)\bigl|_{\mathcal{H}(Z)} < g$. Hence, by the definition of the Gromov--Hausdorff distance we have $d_{GH}\bigl(\cH(X), \cH(Y)\bigr)\le d_{GH}(X, Y).$
\end{proof}

\begin{cor}\label{cor:continuous}
The mapping $\cH$ is continuous.
\end{cor}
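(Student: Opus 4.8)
The plan is to obtain this as an immediate consequence of Theorem~\ref{thm:result}. That theorem shows that $\cH$ is nonexpanding (Lipschitz with constant $1$) with respect to the Gromov--Hausdorff metric on $\cM$, and it is a standard fact that every nonexpanding map between metric spaces is continuous. So the only thing to carry out is the textbook $\e$--$\d$ argument with $\d=\e$.

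Concretely, I would fix $X \in \cM$ and $\e > 0$, and then choose $\d = \e$. For any $Y \in \cM$ with $d_{GH}(X, Y) < \d$, Theorem~\ref{thm:result} yields
\[
d_{GH}\bigl(\cH(X), \cH(Y)\bigr) \le d_{GH}(X, Y) < \d = \e,
\]
so $\cH$ is continuous at $X$; since $X$ was arbitrary, $\cH$ is continuous on all of $\cM$.

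I do not expect any obstacle here: the substantive work has already been done in Theorem~\ref{thm:main} (the key identity $|\cH(X)\cH(Y)|_{\cH(Z)} = |XY|_Z$ in a common ambient space) and in Theorem~\ref{thm:result} (passing from realizations of $(X,Y)$ to realizations of $(\cH(X),\cH(Y))$), and what remains is only the routine implication that a $1$-Lipschitz map is continuous. One could even note that $\cH$ is in fact uniformly continuous, again with modulus $\d = \e$, though continuity is all that is claimed.
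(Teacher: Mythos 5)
Your proof is correct and matches the paper's (implicit) argument: the corollary is stated immediately after Theorem~\ref{thm:result} precisely because a $1$-Lipschitz map is (uniformly) continuous, which is exactly the $\d=\e$ argument you give.
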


\begin{prop}\label{prop:diam}
For any compactum $X$ it holds \begin{enumerate}
\item $\diam X=\diam\cH(X);$
\item $\e(x)=\e\bigr(\cH(X)\bigl).$
\end{enumerate}
\end{prop}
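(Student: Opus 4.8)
The plan is to prove each of the two equalities by a pair of inequalities, in the spirit of the elementary arguments used above: one direction comes from testing the Hausdorff metric on one-point subsets, and the other from the definition of the Hausdorff distance together with the fact that $\cH(X)$ consists of subsets of $X$.

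For (1), I would first observe that for all $x,y\in X$ the one-point sets $\{x\},\{y\}$ belong to $\cH(X)$ and $\bigl|\{x\}\{y\}\bigr|_X=|xy|$; taking the supremum over $x,y$ yields $\diam\cH(X)\ge\diam X$. For the opposite inequality, fix arbitrary $A,B\in\cH(X)$. Since $B$ is nonempty, for every $a\in A$ we have $|aB|=\inf_{b\in B}|ab|\le|ab|\le\diam X$ for any chosen $b\in B$, and symmetrically $|bA|\le\diam X$ for every $b\in B$; hence $|AB|_X=\max\bigl\{\sup_{a\in A}|aB|,\sup_{b\in B}|bA|\bigr\}\le\diam X$, and passing to the supremum over $A,B\in\cH(X)$ gives $\diam\cH(X)\le\diam X$. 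Boundedness of the compactum $X$ guarantees all these suprema are finite.

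For (2), the inequality $\e(\cH(X))\le\e(X)$ again follows from singletons: distinct points $x,y\in X$ give distinct elements $\{x\},\{y\}\in\cH(X)$ with $\bigl|\{x\}\{y\}\bigr|_X=|xy|$, so the infimum over distinct pairs on the left does not exceed the one on the right. For $\e(\cH(X))\ge\e(X)$, take any two distinct $A,B\in\cH(X)$; after possibly interchanging them we may assume there is a point $a\in A\setminus B$. Then every $b\in B$ differs from $a$, so $|ab|\ge\e(X)$, whence $|aB|=\inf_{b\in B}|ab|\ge\e(X)$, and therefore $|AB|_X\ge\sup_{a'\in A}|a'B|\ge|aB|\ge\e(X)$. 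Taking the infimum over all distinct $A,B$ finishes the proof.

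There is no genuinely hard step here; the only point that deserves a sentence of care is that $A\ne B$ in $\cH(X)$ really forces a point of one of the two sets to lie outside the other, so that the lower bound in (2) has something to work with — and this is immediate since $A$ and $B$ are subsets of the same space. Everything else is a direct unwinding of the definitions of the Hausdorff distance, of $\diam$, and of $\e$.
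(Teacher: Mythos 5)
Your proof is correct. It takes a somewhat more elementary route than the paper's: the paper invokes compactness of $A$ and $B$ to produce points $a\in A$, $b\in B$ realizing the Hausdorff distance, $|AB|_{X}=|ab|$, and then identifies $\sup_{A,B}|AB|_{X}$ with $\sup_{a,b}|ab|$, obtaining the statement about $\e$ by ``replacing $\sup$ with $\inf$.'' You instead prove each equality by two separate inequalities, using singletons for one direction and a direct estimate on $|AB|_{X}$ for the other. This buys two things. First, your argument never uses compactness (no extrema need to be attained), so it applies verbatim to the hyperspace of nonempty bounded closed subsets of an arbitrary metric space. Second, in part (2) you make explicit the point that the paper's one-line ``replace $\sup$ by $\inf$'' glosses over: one must check that distinct $A,B\in\cH(X)$ force a point of one set to lie outside the other (equivalently, in the paper's version, that the realizing pair satisfies $a\ne b$), so that the lower bound $\e(X)$ actually applies to $|aB|$. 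Both proofs are sound; yours is marginally longer but self-contained and slightly more general.
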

\begin{proof}
For any $A,B\in \cH(X)$, due to compactness there are points $a \in A$ and $b \in B$ such that $|AB|_{\cH(X)} = |ab| = \bigr|\{a\}\{b\}\bigr|$. So, $\diam\cH(X) = \sup\limits_{A,B\in \cH(X)}{|AB|_{\cH(X)}} = \sup\limits_{a,b \in X}|ab|=\diam X$. Replacing $\sup$ with $\inf$ we get the second statement.
\end{proof}

\begin{cor}\label{cor:lip}
The mapping  $\cH$ is nonexpanding.
\end{cor}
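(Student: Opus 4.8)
The plan is to observe that Corollary \ref{cor:lip} is nothing more than a reformulation of Theorem \ref{thm:result} in the language of Lipschitz maps, so that no new argument is needed. First I would recall that a map $f$ between metric spaces is called \emph{nonexpanding}, or Lipschitz with constant $1$, precisely when it does not increase distances, i.e. $d\bigl(f(p),f(q)\bigr)\le d(p,q)$ for all $p,q$ in the domain. Here the domain and the codomain are both $(\cM, d_{GH})$, which is a metric space by the facts recalled in the Preliminaries, and the map in question is $\cH\colon X\mapsto\cH(X)$.

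Next I would check that $\cH$ is genuinely a self-map of $\cM$: it was recorded at the beginning of Section 3 (citing \cite{burago}) that $\cH(X)$ is again a compactum whenever $X$ is, so $\cH$ does land in $\cM$ and the assertion ``$\cH$ is nonexpanding'' is meaningful. After that the conclusion is immediate, since Theorem \ref{thm:result} asserts exactly that $d_{GH}\bigl(\cH(X),\cH(Y)\bigr)\le d_{GH}(X,Y)$ for all compacta $X$ and $Y$, which is the defining inequality of a nonexpanding map. Continuity (Corollary \ref{cor:continuous}) is subsumed by this, and Proposition \ref{prop:diam}, although not logically needed here, records the consistent fact that $\cH$ exactly preserves both $\diam$ and $\e$, so the nonexpanding property could not have failed already at that coarse level.

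Consequently there is no genuine obstacle to overcome in this corollary: all the substance lies in Theorem \ref{thm:main}, where the Hausdorff hyperspace of an ambient realization is shown to preserve the ambient Hausdorff distance, and in its consequence Theorem \ref{thm:result}; Corollary \ref{cor:lip} is then a one-line deduction from the latter together with the observation that $\cH$ maps $\cM$ into $\cM$. If anything merited checking, it would be the purely bookkeeping point that the inequality of Theorem \ref{thm:result} is quantified over every pair of compacta, which is clear from its proof, since that proof starts from an arbitrary realization of an arbitrary pair.
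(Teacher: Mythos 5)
Your reduction is correct for the literal, one-sided reading of ``nonexpanding'': if the claim is only that $d_{GH}\bigl(\cH(X),\cH(Y)\bigr)\le d_{GH}(X,Y)$ for all compacta $X,Y$, then the corollary is indeed an immediate restatement of Theorem \ref{thm:result}, and your two sanity checks (that $\cH$ maps $\cM$ into $\cM$, and that the theorem is quantified over all pairs) are the right ones. However, the paper's corollary is intended, and proved, in a stronger sense: the abstract glosses ``nonexpanding'' as ``Lipschitz mapping with constant $1$,'' and the paper's own proof establishes that $1$ is the \emph{optimal} Lipschitz constant, i.e.\ that $\cH$ fails to be $C$-Lipschitz for every $C<1$. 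Your proposal omits this lower bound entirely, and in fact explicitly dismisses as ``not logically needed'' the one ingredient the paper uses to obtain it. The paper's argument for $C\ge 1$ runs as follows: take $X=\{x\}$ a one-point space and $Y$ an arbitrary compactum; then $\cH\bigl(\{x\}\bigr)=\{x\}$, Proposition \ref{prop:basic} gives $d_{GH}\bigl(\{x\},Y\bigr)=\frac{1}{2}\diam Y$ and $d_{GH}\Bigl(\cH\bigl(\{x\}\bigr),\cH(Y)\Bigr)=\frac{1}{2}\diam\cH(Y)$, and Proposition \ref{prop:diam} gives $\diam\cH(Y)=\diam Y$; hence $d_{GH}\bigl(\{x\},Y\bigr)=d_{GH}\Bigl(\cH\bigl(\{x\}\bigr),\cH(Y)\Bigr)$, so these pairs are moved isometrically and no constant below $1$ can serve. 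If you read the corollary purely as the inequality, your one-line deduction suffices; to match what the paper actually proves, you need to add this sharpness step, and that is precisely where Proposition \ref{prop:diam} earns its keep.
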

\begin{proof}
By Theorem \ref{thm:result}, the mapping $\cH$ is a Lipschitz mapping with constant $C \le1$. Let $X=\{x\}$ be a one-point space and $Y$ be an arbitrary compact metric space. Notice, that 1) $\cH\bigl(\{x\}\bigr)=\{x\}$; 2) by Proposition \ref{prop:basic} the Gromov--Hausdorff distance between one-point space any other metric space equals half of the diameter of the latter one; 3) by Proposition \ref{prop:diam}, $\diam\cH(Y) = \diam Y$. So $d_{GH}\bigl(\{x\},Y\bigr) = d_{GH}\Bigl(\cH\bigl(\{x\}\bigr),\cH(Y)\Bigr)$. Hence, $C\ge1$, and we finally get $C=1$.
\end{proof}

\section{Examples}
In this section we construct several families of spaces such that the Gromov--Hausdorff distances between their elements are preserved by the mapping $\cH$.  

\begin{prop}\label{prop:example1}
The mapping $\cH$ does not change the distance between simplices.
\end{prop}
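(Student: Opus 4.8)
The plan is to reduce the claim to the explicit formula for Gromov--Hausdorff distances between simplices given in Proposition~\ref{prop:dist1}, and to show that the right-hand side of that formula is invariant under $\cH$. First I would observe that if $M = t\sim_n$ is a simplex, then $\cH(M)$ consists of all nonempty subsets of the $n$-point set $M$; two such subsets $A, B$ are at Hausdorff distance $0$ if they coincide and at Hausdorff distance $t$ otherwise (since every nonzero distance in $M$ equals $t$, and the Hausdorff distance between distinct finite subsets of $M$ is realized by some pair of distinct points). Hence $\cH(t\sim_n)$ is itself a simplex: it is $t\sim_{m}$ with $m = \#\cH(t\sim_n) = 2^n - 1$. (For $n$ infinite this count is just ``more than any finite number'', which is all that matters below.)

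Next I would simply substitute. Given two simplices $t\sim_p$ and $s\sim_q$, we have $\cH(t\sim_p) = t\sim_{2^p-1}$ and $\cH(s\sim_q) = s\sim_{2^q-1}$, and the distances in question are $2d_{GH}(t\sim_p, s\sim_q)$ versus $2d_{GH}(t\sim_{2^p-1}, s\sim_{2^q-1})$. Now apply Proposition~\ref{prop:dist1} to both. The formula there depends on $p,q$ only through the three cases $p=q$, $p>q$, $p<q$, and the output in each case ($|t-s|$, $\max\{t,s-t\}$, $\max\{s,s-t\}$) depends only on $t$ and $s$. Since $p=q$ iff $2^p-1 = 2^q-1$, and $p>q$ iff $2^p-1 > 2^q-1$, the two applications of the formula land in the same case and return the same value. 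Therefore $d_{GH}(\cH(t\sim_p),\cH(s\sim_q)) = d_{GH}(t\sim_p, s\sim_q)$, which is exactly the assertion. When one of $p,q$ is infinite, the simplices are no longer compact, but one can instead invoke Proposition~\ref{prop:dist2} with the finite simplex on one side, or simply note that $\cH$ is only defined on $\cM$, so we may restrict attention to finite $p,q$.

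The only genuinely nontrivial step is the first one: verifying that $\cH$ sends a simplex to a simplex, i.e.\ that any two distinct closed subsets $A,B$ of $t\sim_n$ satisfy $|AB|_{\cH(t\sim_n)} = t$. This is where I would spend the most care. If $A \ne B$, pick without loss of generality a point $a \in A \setminus B$; then $|aB| = \inf_{b\in B}|ab| = t$ because $a \notin B$ forces every $|ab|$ to be a nonzero distance, hence equal to $t$. Thus $\sup_{a\in A}|aB| = t$ (it cannot exceed $t$ since $\diam(t\sim_n) = t$), and likewise the other supremum is at most $t$, so the max is exactly $t$. Everything else is bookkeeping with the known formulas, and I do not expect any obstacle there.
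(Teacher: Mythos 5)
Your proof is correct and follows the same route as the paper: identify $\cH(t\sim_p)=t\sim_{2^p-1}$ and then apply the formula of Proposition~\ref{prop:dist1}, noting that the case distinction ($p=q$, $p>q$, $p<q$) is preserved under $p\mapsto 2^p-1$. You simply supply more detail (the verification that distinct subsets of a simplex are at Hausdorff distance $t$) than the paper, which leaves this as "notice that".
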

\begin{proof}
Notice that $\cH(t\sim_p) = t\sim_{2^p-1}$. Therefore, according to the formula from Proposition \ref{prop:dist1}, $\cH$ does not change  the Gromov--Hausdorff distance between  finite simplices.
\end{proof}

\begin{prop}\label{prop:example2}
Let $M$ be a finite metric space, $m=\#M, n\ge m$, and $t > 0$. Then
$$d_{GH}(t\sim_n, M) = d_{GH}\bigr(\cH(t\sim_n), \cH(M)\bigl).$$        
\end{prop}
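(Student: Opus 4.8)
The plan is to reduce this to the formulas already proved. The key observation is that $\cH(t\sim_n)$ is again a simplex: a nonempty subset of $t\sim_n$ is just a nonempty subset of an $n$-point set, so $\cH(t\sim_n) = t\sim_{2^n-1}$, exactly as noted in the proof of Proposition~\ref{prop:example1}. The other ingredient is controlling $\cH(M)$: by Proposition~\ref{prop:diam} we have $\diam\cH(M) = \diam M$ and $\e(\cH(M)) = \e(M)$, and since $\cH(M)$ is the (finite!) set of nonempty subsets of the finite set $M$, we have $\#\cH(M) = 2^m - 1$.

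First I would record these three facts: $\cH(t\sim_n) = t\sim_{2^n-1}$, $\cH(M)$ is a finite metric space with $\#\cH(M) = 2^m-1$, and $\diam\cH(M) = \diam M$, $\e(\cH(M)) = \e(M)$. Then the left-hand side $d_{GH}(t\sim_n, M)$ is computed by the appropriate proposition among \ref{prop:dist2} and \ref{prop:dist3}: if $n > m$ use Proposition~\ref{prop:dist2} to get $2d_{GH}(t\sim_n,M) = \max\{t, \diam M - t\}$; if $n = m$ use Proposition~\ref{prop:dist3} to get $2d_{GH}(t\sim_n,M) = \max\{t - \e(M), \diam M - t\}$.

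Next I would compute the right-hand side $d_{GH}(t\sim_{2^n-1}, \cH(M))$ using the same propositions applied to the space $\cH(M)$ with $\#\cH(M) = 2^m - 1$. One needs the comparison between $2^n - 1$ and $2^m - 1$, which is governed by the comparison between $n$ and $m$: when $n > m$ we have $2^n - 1 > 2^m - 1$, so Proposition~\ref{prop:dist2} applies and gives $\max\{t, \diam\cH(M) - t\} = \max\{t, \diam M - t\}$, matching the left side. When $n = m$ we have $2^n - 1 = 2^m - 1$, so Proposition~\ref{prop:dist3} applies and gives $\max\{t - \e(\cH(M)), \diam\cH(M) - t\} = \max\{t - \e(M), \diam M - t\}$, again matching. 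In either case the two sides coincide, which finishes the proof.

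The only mild subtlety — and the closest thing to an obstacle — is making sure the case split is handled cleanly: Proposition~\ref{prop:dist2} requires a strict inequality $m > n$ in its hypothesis (in our notation, strictly more vertices in the simplex than points in $M$), so one must be careful to invoke \ref{prop:dist2} exactly when $n > m$ and \ref{prop:dist3} exactly when $n = m$, and to check that $2^n - 1 > 2^m - 1$ iff $n > m$ so that the same dichotomy transfers to the image spaces. Beyond that the argument is a direct substitution into the known formulas, using Proposition~\ref{prop:diam} to identify the diameter and the minimal-distance parameter of $\cH(M)$ with those of $M$.
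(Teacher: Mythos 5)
Your proof is correct and follows the same route as the paper: identify $\cH(t\sim_n)=t\sim_{2^n-1}$ and $\#\cH(M)=2^m-1$, use Proposition~\ref{prop:diam} to transfer $\diam$ and $\e$, and then substitute into Propositions~\ref{prop:dist2} and~\ref{prop:dist3} according to whether $n>m$ or $n=m$. The paper's proof is just a terser version of exactly this argument; your explicit handling of the case split and of the equivalence $2^n-1>2^m-1 \iff n>m$ is a welcome elaboration.
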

\begin{proof}
By Proposition \ref{prop:diam} $\e(M)=\e\bigl( \cH(M)\bigr)$ and  $\diam X= \diam \cH(X)$. Then the result follows from Proposition \ref{prop:dist2} and Proposition \ref{prop:dist3}.
\end{proof}

The family of all subcompacta of $X$ we denote by $\cK(X)$. In order to construct the next family of examples we will use the fact that mapping $\cH$ preserves connectedness. Though this fact follows from work~\cite{borsuk}, in which Borsuk and Mazurkiewicz proved that $\cK(X)$ is arcwise connected when $X$ is continuum (i.e., connected compact metric space), we will prove it nevertheless, especially since  our proof won't use compactness of $X$. In our proof  we will use the following estimation on the Hausdorff distance.

\begin{prop}\label{prop:noneq}
Let $X=\{x_1,\ldots,x_n\}$ and $Y=\{y_1,\ldots,y_k\}$ be finite subsets of a metric space $Z$. Then for every permutation $\s$ we have $|XY|_Z\le\max\limits_{\substack{1\le i \le n \\ 1\le j \le k}}|x_iy_{\s(j)}|$.
\end{prop}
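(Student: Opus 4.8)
The plan is to unwind the definition of $|XY|_Z$ and estimate each of its two one-sided suprema separately against the asserted maximum. Put $D:=\max_{1\le i\le n,\,1\le j\le k}|x_iy_{\s(j)}|$. Since $\s$ is a permutation of the index set of $Y$, the collection $\{y_{\s(1)},\dots,y_{\s(k)}\}$ is exactly $Y$, so $D=\max_{x\in X,\,y\in Y}|xy|=|XY|'$; in particular $D$ does not depend on $\s$.

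First I would bound $\sup_{x\in X}|xY|$. For each fixed $x_i$, finiteness of $Y$ gives $|x_iY|=\min_{1\le j\le k}|x_iy_j|$, and every entry $|x_iy_j|$ is at most $D$ by the definition of $D$; hence $|x_iY|\le D$. Taking the maximum over the finitely many $x_i$ yields $\sup_{x\in X}|xY|\le D$. The same argument with the roles of $X$ and $Y$ interchanged gives, for each $y_j$, that $|y_jX|=\min_{1\le i\le n}|x_iy_j|\le D$, so $\sup_{y\in Y}|yX|\le D$. Combining the two estimates,
$$|XY|_Z=\max\Bigl\{\sup_{x\in X}|xY|,\ \sup_{y\in Y}|yX|\Bigr\}\le D,$$
which is the assertion.

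I do not expect any real obstacle: the statement is just the elementary inequality $|XY|_Z\le|XY|'$ specialized to finite sets and rewritten in the index notation that will be convenient in the connectedness argument, and the permutation $\s$ plays a purely cosmetic role — it merely relabels $Y$, so the bound holds uniformly in $\s$. The only point deserving a word of care is that finiteness turns the infima in $|xY|$ and $|yX|$ into minima, which is what licenses comparing them against a single entry of the distance matrix; without finiteness one would instead invoke directly that an infimum never exceeds any member of the family over which it is taken.
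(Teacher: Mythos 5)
Your proof is correct and follows essentially the same route as the paper: bound each of the two one-sided suprema in $|XY|_Z$ by the maximum entry of the distance matrix and take the larger of the two. Your additional observation that the permutation $\s$ merely relabels $Y$, so the bound is really just $|XY|_Z\le |XY|'$, is accurate and not in the paper, but it does not change the argument.
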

\begin{proof}
For any point $x_i$ we have $|x_iY|\le|x_iy_{\s(j)}|$ , so $\sup\limits_{1\le i \le n}|x_iY|  \le \max\limits_{\substack{1\le i \le n \\ 1\le j \le k}}|x_iy_{\s(j)}|$. Similarly $\sup\limits_{1\le j \le k}|y_{\s(j)}X| \le \max\limits_{\substack{1\le i \le n \\ 1\le j \le k}}|x_iy_{\s(j)}|$. Hence, $|XY|_Z\le\max\limits_{\substack{1\le i \le n \\ 1\le j \le k}}|x_iy_{\s(j)}|$ as required.
\end{proof}

\begin{prop}\label{prop:connect}
If $X$ is a connected metric space then $\cK\(X\)$ is also connected.
\end{prop}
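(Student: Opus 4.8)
The plan is to prove that the set of all nonempty finite subsets of $X$ is a connected \emph{and} dense subset of $\cK(X)$; since the closure of a connected set is connected, connectedness of $\cK(X)$ will follow at once.

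For each $n\ge1$ put $\mathcal{F}_n=\{A\in\cK(X):\#A\le n\}$, and let $\varphi_n\colon X^n\to\cK(X)$ be the map $(x_1,\ldots,x_n)\mapsto\{x_1,\ldots,x_n\}$, so that $\mathcal{F}_n=\varphi_n(X^n)$. I would first check that $\varphi_n$ is nonexpanding when $X^n$ carries the metric $\max_i|x_ix_i'|$: comparing $\{x_1,\ldots,x_n\}$ with $\{x_1',\ldots,x_n'\}$ and pairing $x_i$ with $x_i'$, every point of either set lies within $\max_i|x_ix_i'|$ of the other, which is precisely the estimate supplied by Proposition~\ref{prop:noneq}. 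Since $X$ is connected, the finite power $X^n$ is connected, hence so is its continuous image $\mathcal{F}_n$.

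Fix $x_0\in X$. The constant tuple $(x_0,\ldots,x_0)$ shows $\{x_0\}\in\mathcal{F}_n$ for every $n$, so $\{\mathcal{F}_n\}_{n\ge1}$ is a family of connected sets with a common point, and therefore its union $\mathcal{F}=\bigcup_{n\ge1}\mathcal{F}_n$ — the family of all nonempty finite subsets of $X$ — is connected. Density of $\mathcal{F}$ in $\cK(X)$ is the last ingredient: given $K\in\cK(X)$ and $\e>0$, total boundedness of $K$ provides a finite set $F\ss K$ such that every point of $K$ lies within $\e$ of $F$, whence $|KF|_{\cK(X)}\le\e$ (since also $F\ss K$). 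Thus $\cK(X)=\overline{\mathcal{F}}$, and the closure of the connected set $\mathcal{F}$ is connected.

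Observe that $X$ itself is never required to be compact; compactness enters only through the \emph{subset} $K$, to extract a finite $\e$-net inside it — in agreement with the remark preceding the proposition. I do not expect a genuine obstacle here: the only step needing a little attention is the nonexpansion of $\varphi_n$, i.e.\ applying Proposition~\ref{prop:noneq} with the coordinatewise pairing of the two tuples; the connectedness of $X^n$ and the ``continuous image (resp.\ closure) of a connected set is connected'' facts are standard point-set topology.
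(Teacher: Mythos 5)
Your proof is correct and follows essentially the same route as the paper: the map $(x_1,\ldots,x_n)\mapsto\{x_1,\ldots,x_n\}$ from $(X^n,\max_i|x_ix_i'|)$ is $1$-Lipschitz by Proposition~\ref{prop:noneq}, its images are connected and share a common point, and their union (the finite subsets) is dense in $\cK(X)$, whose closure is therefore connected. Your write-up is in fact slightly more explicit than the paper's on the density step (the finite $\e$-net inside a compact $K$) and on why the union of the $\mathcal{F}_n$ is connected.
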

\begin{proof}
If $X$ is a connected metric space then for any $n\ge1$ the  metric space $ (X^n, d_n)$, where $d_n(x,x')  = \max\limits_{1\le i \le n}{d(x_i, x'_i)}$, is also connected. Let $\pi_n \: X^n \to \cK(X)$ defined as $\pi_n(x) = \bigcup\limits_{i=1}\limits^{n}{\{x_i\}}$. It is obvious that  $\pi_n(X^n)$ is the set of subsets of $X$  with cardinality at most $n$. By Proposition \ref{prop:noneq}, $\pi_n$ is $1$-Lipschitz, thus it is continuous. Therefore, $\pi_n(X^n)$ is connected, as an image of continuous mapping. Hence, $\bigcup\limits_{n}{\pi_n(X^n)}$ is connected because $\pi_n(X^n)\cap\pi_k(X^k)=\pi_k(X^k) \ne\emptyset$ for any $k \le n$. But finite sets are dense in $\cK(X)$, thus $\cK(X) = \overline{\bigcup\limits_{n}{\pi_n(X^n)}}$ is also connected as closure of  connected space.
\end{proof}

Now we prove the following property of connected metric spaces.

\begin{prop}\label{prop:property}
If $X = \bigsqcup\limits_{i = 1}\limits^{n} X_i$ is a connected metric space then for any $i \le n$ there is $j\ne i$ such that $|X_iX_j| = 0$.
\end{prop}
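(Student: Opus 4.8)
The plan is to argue by contradiction, producing a separation of $X$ into two nonempty disjoint open sets whenever the conclusion fails; the finiteness of the decomposition is what makes this work.

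Fix an index $i$ and suppose, contrary to the claim, that $|X_iX_j|>0$ for every $j\ne i$. Since there are only finitely many indices $j\ne i$, the number $\d:=\min_{j\ne i}|X_iX_j|$ is well defined and strictly positive; moreover, because $X\setminus X_i=\bigsqcup_{j\ne i}X_j$, we get $|X_i(X\setminus X_i)|=\inf_{j\ne i}|X_iX_j|=\d>0$.

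Next I would introduce the open $\tfrac{\d}{2}$-neighbourhoods $U=\{z\in X:|zX_i|<\tfrac{\d}{2}\}$ and $V=\{z\in X:|z(X\setminus X_i)|<\tfrac{\d}{2}\}$. Both are open, since $z\mapsto|zX_i|$ and $z\mapsto|z(X\setminus X_i)|$ are $1$-Lipschitz, hence continuous; both are nonempty, as $X_i\subset U$ and $X\setminus X_i\subset V$, and these sets are nonempty (here $n\ge 2$ is used); and $U\cup V=X$, because every point of $X$ lies in $X_i$ or in $X\setminus X_i$. The key computation is that $U\cap V=\emptyset$: if $z\in U\cap V$, then there are $a\in X_i$ and $b\in X\setminus X_i$ with $|za|<\tfrac{\d}{2}$ and $|zb|<\tfrac{\d}{2}$, whence $|ab|<\d$ by the triangle inequality, contradicting $|X_i(X\setminus X_i)|=\d$. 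Thus $\{U,V\}$ is a separation of $X$, contradicting connectedness, and the claim follows.

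The argument is short; the only step that really requires care is the passage from ``$|X_iX_j|>0$ for each $j\ne i$'' to ``$|X_i(X\setminus X_i)|>0$'', which is precisely where finiteness of the decomposition enters — for an infinite decomposition one could have all $|X_iX_j|>0$ yet $\inf_{j\ne i}|X_iX_j|=0$, so no separation would be produced and the statement would fail.
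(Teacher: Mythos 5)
Your proof is correct and follows essentially the same route as the paper's: assume $|X_iX_j|>0$ for all $j\ne i$, use finiteness to get $\bigl|X_i(X\setminus X_i)\bigr|=\d>0$, and separate $X$ by the two open $\d/2$-neighbourhoods, contradicting connectedness. You merely spell out the details (openness via Lipschitz distance functions, the triangle-inequality step, the role of $n\ge 2$) that the paper leaves implicit.
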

\begin{proof}
Suppose the opposite, i.e., there exists a number $i$ such that $|X_iX_j|>0$ for any number $j\ne i, 1 \le j\le n$. Then $\bigr|X(X\setminus X_i)\bigl|=\d > 0$. So, the open neighborhoods of $X_i$ and $X\setminus X_i$ with radius $\d/2$ don't intersect each other, which means that  $X$ is disconnected .\qedhere
\end{proof}

\begin{prop}\label{prop:con_dist}
Let $X$ be a connected metric space. Then for every real $t$ greater than $\diam X$, and any positive integer $p$ we have
$d_{GH}(t\sim_p, X) = \frac{1}{2}t.$
\end{prop}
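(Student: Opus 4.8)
The plan is to prove the two inequalities $d_{GH}(t\sim_p,X)\le\tfrac12 t$ and $d_{GH}(t\sim_p,X)\ge\tfrac12 t$ separately; throughout I assume $p\ge2$, so that $\diam(t\sim_p)=t$ (for $p=1$ the space $t\sim_p$ is a single point and the formula instead reads $d_{GH}(t\sim_1,X)=\tfrac12\diam X$). The upper bound is immediate from Proposition~\ref{prop:basic}(1): since $\diam X<t=\diam(t\sim_p)$, we get $d_{GH}(t\sim_p,X)\le\tfrac12\max\{\diam X,\diam(t\sim_p)\}=\tfrac12 t$.

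For the lower bound I would argue by contradiction directly from the realization definition of $d_{GH}$, which keeps the argument valid without assuming $X$ compact. Suppose $d_{GH}(t\sim_p,X)<\tfrac12 t$ and fix a real number $r$ with $d_{GH}(t\sim_p,X)<r<\tfrac12 t$, together with a realization $(S,X',Z)$ of $(t\sim_p,X)$ (so $S\cong t\sim_p$ and $X'\cong X$) with $|SX'|_Z<r$. Write $v_1,\dots,v_p$ for the points of $S$, so that $|v_iv_j|=t$ whenever $i\ne j$, and set $X_i'=\{x\in X':|xv_i|<r\}$ for $i=1,\dots,p$. From $\sup_{x\in X'}|xS|<r$ and finiteness of $S$ one obtains $X'=\bigcup_{i=1}^p X_i'$, and from $\sup_{s\in S}|sX'|<r$ one obtains $X_i'\ne\emptyset$ for every $i$. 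Applying the triangle inequality along $v_i,x,x',v_j$ for $x\in X_i'$ and $x'\in X_j'$ with $i\ne j$ gives $|xx'|>t-2r>0$; in particular the sets $X_i'$ are pairwise disjoint, so $X'=\bigsqcup_{i=1}^p X_i'$ presents $X'$ (which is connected, being isometric to $X$) as a disjoint union of $p\ge2$ nonempty pieces with $|X_i'X_j'|\ge t-2r>0$ for all $i\ne j$. This contradicts Proposition~\ref{prop:property}. Hence $d_{GH}(t\sim_p,X)\ge\tfrac12 t$, and combined with the upper bound this gives the claim. (When $X$ is compact one can instead read the lower bound off Proposition~\ref{prop:equality}: for any $R\in\cR(t\sim_p,X)$ the blocks $R(v_i)$ cover $X$, are pairwise disjoint unless some pair has zero gap, and in either case Proposition~\ref{prop:property} forces $\dis R\ge t$.)

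The step that needs care is the bookkeeping in the lower bound: one must take the realization with strict slack $|SX'|_Z<r<\tfrac12 t$ rather than with $\le$, so that the pieces $X_i'$ are certainly nonempty and genuinely pairwise disjoint even when the infima defining $|sX'|$ or $|xS|$ are not attained, i.e.\ even when $X$ is not compact. Once that is arranged, the only substantial input is Proposition~\ref{prop:property} on disjoint decompositions of connected spaces; everything else is the triangle inequality.
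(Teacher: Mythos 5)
Your proof is correct, and its skeleton is the same as the paper's: decompose $X$ into pieces associated with the $p$ vertices of the simplex, observe that any two points lying in distinct pieces are far apart, and derive a contradiction with connectedness via Proposition~\ref{prop:property}. The differences are in the formalism and they are worth recording. The paper works with correspondences and Proposition~\ref{prop:equality}, taking $R\in\cR(t\sim_p,X)$ with $\dis R\le 2d_{GH}(t\sim_p,X)+\d$ and setting $X_i=R(i)$; it then splits into the case where two blocks intersect (giving $\dis R\ge t$ at once) and the case where they are disjoint (where Proposition~\ref{prop:property} supplies points in distinct blocks at distance less than any $\d'$, again forcing $\dis R\ge t$). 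Since Proposition~\ref{prop:equality} is stated in the paper only for compacta while the proposition concerns an arbitrary connected metric space, your realization-based argument -- defining $X_i'$ as the set of points within $r$ of the $i$-th vertex for a strict slack $r<\tfrac12 t$ -- is the cleaner route: it stays within the primary definition of $d_{GH}$, guarantees the pieces are nonempty without any attainment of infima, and gets a uniform positive gap $t-2r$ between distinct pieces so that Proposition~\ref{prop:property} is contradicted outright rather than through a limiting argument. You are also right to exclude $p=1$: there $t\sim_1$ is a one-point space, Proposition~\ref{prop:basic} gives $d_{GH}(t\sim_1,X)=\tfrac12\diam X<\tfrac12 t$, and the stated formula fails; the paper's proof silently needs $p\ge2$ as well (its case analysis is over pairs $i\ne j$), so this is a small erratum in the statement that your hypothesis repairs.
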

\begin{proof}
Let $1,2,\dots ,p$ be points of $t\sim_p$, and $\d$ be an arbitrary positive real number. Then there is a correspondence $R\in \cR(t\sim_p, X)$ such that  $\dis R \le 2d_{GH}(t\sim_p, X) + \d$. We put $X_i =R(i)$. Assume that there are some integers $i, j$ such that $ X_i\cap X_j \ne \emptyset$. Since $\diam X \le t$ for any  $x \in R(i)$, and $ x' \in R(j)$, we have $0 \le t - |xx'| \le t$, so $\dis R \ge t$. If for any integers $i,i'$ it holds $X_i\cap X_{i'} = \emptyset$ then by Proposition \ref{prop:property}, there exists an integer $j$ such that $|X_iX_j| = 0$. This means that  for any real $\d ' > 0$ we can find points $x_i \in X_i$ and $x_j \in X_j$ such that $|x_ix_f|\le \d'$. Similarly as before, $\dis R \ge t - \d'$, but $\d'$ is an arbitrary real number, so again $\dis R \ge t$. As a result we have $2d_{GH}(t\sim_p, X)\ge t - \d$, but once again the choice of $\d$ is arbitrary, hence $d_{GH}(t\sim_p, X)\ge \frac{1}{2}t.$ By Proposition \ref{prop:basic}, we have $d_{GH}(t\sim_p, X)\le \frac{1}{2}t$, so $d_{GH}(t\sim_p, X)= \frac{1}{2}t$.
\end{proof}

\begin{cor}\label{cor:example3} 
Let $X$ be a continuum. Then for every real $t$ such that $t \ge \diam X$, and any positive integer $p$, it holds
$$d_{GH}(t\sim_p, X) = d_{GH}\bigl(\cH(t\sim_p), \cH(X)\bigr).$$
\end{cor}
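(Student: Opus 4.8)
The plan is to reduce Corollary~\ref{cor:example3} to the combination of Proposition~\ref{prop:con_dist} (which computes $d_{GH}(t\sim_p, X)$ for a connected space) and Theorem~\ref{thm:result} together with Corollary~\ref{cor:lip}, supplemented by the three structural facts about $\cH$ already established: $\cH(t\sim_p) = t\sim_{2^p-1}$ from the proof of Proposition~\ref{prop:example1}, the preservation of connectedness from Proposition~\ref{prop:connect}, and the preservation of diameter from Proposition~\ref{prop:diam}.

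First I would treat the two sides separately. For the left-hand side, since $X$ is a continuum (in particular connected) and $t \ge \diam X$, I want to apply Proposition~\ref{prop:con_dist} to get $d_{GH}(t\sim_p, X) = \frac{1}{2}t$. A small wrinkle is that Proposition~\ref{prop:con_dist} is stated for $t$ \emph{strictly} greater than $\diam X$, whereas the corollary allows $t = \diam X$; I would note that the equality $d_{GH}(t\sim_p, X) = \frac12 t$ persists at $t = \diam X$ either by a direct argument (the lower bound from Proposition~\ref{prop:basic}(1) gives $d_{GH} \le \frac12 t$, and the distortion argument in the proof of Proposition~\ref{prop:con_dist} still forces $\dis R \ge t$ when $\diam X \le t$, which is exactly the hypothesis used), or by a limiting/continuity remark. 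For the right-hand side, $\cH(t\sim_p) = t\sim_{2^p-1}$, and $\cH(X)$ is a continuum as well: it is compact (being the hyperspace of a compactum), and it is connected by Proposition~\ref{prop:connect} since $\cK(X) = \cH(X)$ for a compactum $X$. Moreover $\diam\cH(X) = \diam X \le t$ by Proposition~\ref{prop:diam}. Hence Proposition~\ref{prop:con_dist} (with the same boundary-case remark) applies to the pair $(t\sim_{2^p-1}, \cH(X))$ and yields $d_{GH}\bigl(\cH(t\sim_p), \cH(X)\bigr) = \frac{1}{2}t$.

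Combining the two computations gives $d_{GH}(t\sim_p, X) = \frac12 t = d_{GH}\bigl(\cH(t\sim_p), \cH(X)\bigr)$, which is exactly the claim. Note that this route actually proves equality directly and does not even need the nonexpanding inequality of Theorem~\ref{thm:result}; that inequality serves only as a sanity check ($\frac12 t \le \frac12 t$).

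The main obstacle I anticipate is purely the boundary case $t = \diam X$, since Proposition~\ref{prop:con_dist} as stated excludes it. I would resolve this by re-examining the distortion argument: for any correspondence $R$, either some $X_i = R(i)$ and $X_j = R(j)$ meet, or (by Proposition~\ref{prop:property}) some $|X_iX_j| = 0$; in either situation there are points $x \in R(i)$, $x' \in R(j)$ with $|xx'|$ arbitrarily small, so $\dis R \ge t - |xx'|$ can be pushed to $\ge t - \delta'$ for every $\delta' > 0$. The only place $t > \diam X$ was used is to guarantee $t - |xx'| \ge 0$, i.e.\ that this quantity is a genuine lower bound on $\dis R$; but $t \ge \diam X \ge |xx'|$ already suffices. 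Thus $d_{GH}(t\sim_p, X) \ge \frac12 t$ holds under the weaker hypothesis $t \ge \diam X$, and the matching upper bound is Proposition~\ref{prop:basic}(1). Everything else is bookkeeping: verifying $\cH(X)$ is a continuum and that its diameter equals that of $X$, both of which are immediate from the cited results.
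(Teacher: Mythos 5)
Your proposal is correct and is exactly the intended argument: apply Proposition~\ref{prop:con_dist} to both pairs, using $\cH(t\sim_p)=t\sim_{2^p-1}$, the connectedness of $\cH(X)=\cK(X)$ from Proposition~\ref{prop:connect}, and $\diam\cH(X)=\diam X$ from Proposition~\ref{prop:diam}. Your observation about the boundary case $t=\diam X$ is well taken --- the proof of Proposition~\ref{prop:con_dist} only ever uses $\diam X\le t$, so the formula indeed extends to the non-strict inequality as the corollary requires.
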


\section{Additional Properties of the Hausdorff Mapping}
In this section we present some properties which may help to prove or to find a counterexample to hypothesis that $\cH$ is isometric.

Notice the following obvious fact:

\begin{prop}\label{prop:isometric}
Suppose that the Hausdorff mapping preserves distance between points of some dense subspace  of $\cM$. Then $\cH$ is isometric.
\end{prop}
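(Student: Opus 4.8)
The plan is to exploit the fact that $\cH$ is nonexpanding (Corollary~\ref{cor:lip}) together with a standard density/continuity argument. Let $D \ss \cM$ be the dense subspace on which $\cH$ preserves distances, i.e.\ $d_{GH}(\cH(X),\cH(Y)) = d_{GH}(X,Y)$ for all $X,Y \in D$. The goal is to upgrade this to an equality on all of $\cM$. First I would fix arbitrary compacta $X, Y \in \cM$ and, using density of $D$, choose sequences $X_n \to X$ and $Y_n \to Y$ in $\cM$ with $X_n, Y_n \in D$. Since $\cH$ is $1$-Lipschitz, $\cH(X_n) \to \cH(X)$ and $\cH(Y_n) \to \cH(Y)$, and the function $d_{GH}$ is itself continuous on $\cM \times \cM$ (it is $1$-Lipschitz in each argument by the triangle inequality). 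Hence $d_{GH}(\cH(X_n),\cH(Y_n)) \to d_{GH}(\cH(X),\cH(Y))$ and $d_{GH}(X_n,Y_n) \to d_{GH}(X,Y)$.

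The key step is then simply to pass to the limit in the identity $d_{GH}(\cH(X_n),\cH(Y_n)) = d_{GH}(X_n,Y_n)$, valid for every $n$ because $X_n, Y_n \in D$. Taking $n \to \infty$ on both sides and using the two convergences from the previous paragraph yields $d_{GH}(\cH(X),\cH(Y)) = d_{GH}(X,Y)$. Since $X$ and $Y$ were arbitrary, $\cH$ is an isometric embedding of $\cM$ into itself, which is exactly the claim.

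I do not expect any serious obstacle here; the statement is flagged as ``obvious'' in the paper for good reason. The only points requiring a word of care are (i) that a Lipschitz map on a metric space carries convergent sequences to convergent sequences, so approximants for $X$ and $Y$ are automatically carried to approximants for $\cH(X)$ and $\cH(Y)$, and (ii) that $d_{GH}$ is continuous on $\cM \times \cM$, which follows from the elementary inequality $|d_{GH}(A,B) - d_{GH}(A',B')| \le d_{GH}(A,A') + d_{GH}(B,B')$. Neither fact uses anything beyond the metric axioms and the Lipschitz bound already established, so the proof is a three-line limiting argument once these standing facts are invoked.
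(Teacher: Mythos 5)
Your argument is correct and is precisely the one the paper has in mind: it states the proposition without proof as an ``obvious fact'' immediately after establishing that $\cH$ is nonexpanding (Corollary~\ref{cor:lip}) and continuous (Corollary~\ref{cor:continuous}), and your density--continuity limiting argument is the standard way to cash that in. No gaps.
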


\begin{cor}\label{cor:finite}
If restriction of $\cH$ on the set of all finite metric space is isometric then it is isometric on the whole $\cM$.  
\end{cor}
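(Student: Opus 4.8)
The final statement is Corollary \ref{cor:finite}: if the restriction of $\cH$ to the set of all finite metric spaces is isometric, then $\cH$ is isometric on all of $\cM$. The plan is to derive this directly from Proposition \ref{prop:isometric} together with the standard fact that finite metric spaces are dense in $\cM$ with respect to the Gromov--Hausdorff metric.

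First I would recall why the set of finite metric spaces is dense in $\cM$. Given any compactum $X$ and any $\varepsilon > 0$, compactness yields a finite $\varepsilon$-net $N \subset X$; viewing $N$ as a metric subspace of $X$, the identity inclusion gives a correspondence between $X$ and $N$ (each point of $X$ paired with a nearest net point, each net point paired with itself) whose distortion is at most $2\varepsilon$, so $d_{GH}(X, N) \le \varepsilon$ by Proposition \ref{prop:equality}. Hence every compactum is a GH-limit of finite metric spaces, i.e., $\mathcal{F} := \{$finite metric spaces$\}$ is dense in $\cM$.

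Next I would simply invoke the hypothesis: $\cH$ restricted to $\mathcal{F}$ is isometric, which in particular means $\cH$ preserves the distance between any two points of $\mathcal{F}$. Since $\mathcal{F}$ is a dense subspace of $\cM$, Proposition \ref{prop:isometric} applies verbatim and yields that $\cH$ is isometric on all of $\cM$. This is essentially a one-line deduction once density is in hand.

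I do not expect any genuine obstacle here; the corollary is a routine consequence of the preceding proposition. The only point requiring a word of care is the density claim, and even that is entirely standard — the one subtlety worth stating explicitly is that $\cH$ is continuous (Corollary \ref{cor:continuous}), so that Proposition \ref{prop:isometric}'s passage from a dense subset to the whole space is legitimate; but since Proposition \ref{prop:isometric} already packages that argument, nothing further is needed. Accordingly, the proof is: (1) note $\mathcal{F}$ is dense in $\cM$; (2) apply Proposition \ref{prop:isometric}.
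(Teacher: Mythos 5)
Your proof is correct and follows exactly the route the paper intends: finite metric spaces are dense in $\cM$ (via finite $\varepsilon$-nets and Proposition \ref{prop:equality}), so Proposition \ref{prop:isometric} applies directly. The paper leaves this deduction implicit, and your filled-in density argument is the standard one.
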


Finite metric space is called \emph{space in general position}, if its all nonzero distances are distinct and all triangle inequalities are strict.

\begin{cor}\label{corollary:general}
If the Hausdorff mapping preserves distances between spaces in general position then it is isometric.
\end{cor}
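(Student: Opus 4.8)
The plan is to deduce the statement from Proposition~\ref{prop:isometric}: it suffices to show that the spaces in general position form a \emph{dense} subspace of $\cM$, since then the hypothesis says exactly that $\cH$ preserves distances on a dense subspace, and Proposition~\ref{prop:isometric} does the rest. Thus the whole proof reduces to an approximation statement that has nothing to do with $\cH$ itself.

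First I would recall that finite metric spaces are dense in $\cM$: given a compactum $X$ and $\e>0$, pick a finite $\e$-net $N\ss X$; then $(X,N,X)$ (both sets viewed inside $X$) is a realization of the pair $(X,N)$ with $|XN|_X\le\e$, so $d_{GH}(X,N)\le\e$. Hence it is enough to approximate an arbitrary finite metric space $M=\{x_1,\dots,x_n\}$ by spaces in general position. To do this I would perturb the metric: writing $d_{ij}=|x_ix_j|$, choose small reals $\e_{ij}=\e_{ji}$ with $\e_{ii}=0$ and set $d'_{ij}=d_{ij}+\e_{ij}$. For $\max_{ij}|\e_{ij}|$ small enough every already-strict triangle inequality of $M$ stays strict and every $d'_{ij}$ with $i\ne j$ stays positive; for the finitely many triangle inequalities that are equalities, say $d_{ij}=d_{ik}+d_{kj}$, I would additionally impose $\e_{ij}<\e_{ik}+\e_{kj}$, and I would also require all the nonzero values $d'_{ij}$ to be pairwise distinct. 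All of this is a finite system of strict linear inequalities and non-equalities in the variables $\e_{ij}$ which is satisfiable in every neighbourhood of the origin, so such a choice exists with $\max_{ij}|\e_{ij}|$ as small as we like. Then $(M,d')$ is a metric space in general position, and the identity correspondence $R=\{(x_i,x_i)\}\in\cR\bigl((M,d),(M,d')\bigr)$ has $\dis R=\max_{ij}|\e_{ij}|$, so by Proposition~\ref{prop:equality} we get $d_{GH}\bigl((M,d),(M,d')\bigr)\le\tfrac12\max_{ij}|\e_{ij}|$, which is arbitrarily small.

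Combining the two steps, the spaces in general position are dense in $\cM$; applying Proposition~\ref{prop:isometric} (which yields that $\cH$ preserves all distances once it does so on a dense set, using the continuity from Corollary~\ref{cor:continuous}) finishes the proof. The only point requiring any care is the perturbation step: one must arrange simultaneously that $(M,d')$ is a genuine metric, has all triangle inequalities strict, and has all nonzero distances distinct. As indicated above this is merely a matter of checking that a finite collection of strict constraints on the $\e_{ij}$ is jointly satisfiable arbitrarily close to $0$, so there is no real obstacle here — the argument is elementary bookkeeping, and the substantive content is entirely carried by Proposition~\ref{prop:isometric}.
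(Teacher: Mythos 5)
Your proposal is correct and matches the paper's (unstated) intent: the paper presents this as an immediate consequence of Proposition~\ref{prop:isometric}, with the density of general-position spaces in $\cM$ taken for granted, and you supply exactly that density argument (finite $\e$-nets plus a small generic perturbation of the distance matrix). The perturbation step is sound — e.g.\ taking all $\e_{ij}$ equal and positive already satisfies the linear constraints, and a further generic tweak gives distinctness — so there is no gap.
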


\end{document}